\theoremstyle{plain}
\newtheorem{theorem}{Theorem}[section]
\newtheorem{lemma}{Lemma}[section]
\newtheorem{proposition}{Proposition}[section]
\newtheorem{corollary}{Corollary}[section]
\theoremstyle{definition}
\theoremstyle{remark}
\newtheorem{remark}{Remark}[section]
\DeclarePairedDelimiter{\abs}{\lvert}{\rvert} 
\DeclarePairedDelimiter{\norm}{\lVert}{\rVert}
\DeclareMathOperator{\supp}{supp}
\newcommand{\R}{\mathbb{R}}
\newcommand{\C}{\mathbb{C}}
\newcommand{\N}{\mathbb{N}}
\newcommand{\normeq}[1]{{\left\vert\kern-0.25ex\left\vert\kern-0.25ex\left\vert #1 
    \right\vert\kern-0.25ex\right\vert\kern-0.25ex\right\vert}}
\newcommand{\Rn}{{\mathbb R^n}}
\newcommand{\alphaAppell}{a}
\newcommand{\betaAppell}{b}
\title{Unique Continuation Properties from one time for hyperbolic Schr\"{o}dinger equations}
\author[1]{Juan Antonio Barcel\'o} 
\author[2]{Biagio Cassano}
\author[3]{Luca Fanelli}		
\affil[1]{ETSI de Caminos, Universidad Polit\'ecnica de Madrid, 28040 Madrid, Spain; juanantonio.barcelo@upm.es}
\affil[2]{Dipartimento di Matematica e Fisica, Università degli Studi della Campania ``L. Vanvitelli'', Viale Lincoln 5, 81100 Caserta, Italy; biagio.cassano@unicampania.it}
\affil[3]{Departamento de Matem\'aticas, Universidad del Pa\'is Vasco/Euskal Herriko Unibertsitatea (UPV/EHU), 
Aptdo. 644, 48080, Bilbao, Spain; 
%Barrio Sarriena s/n, 48940, Leioa, Spain; \\
BCAM - Basque Center for Applied Mathematics 48009 Bilbao, Spain;  
Ikerbasque, Basque Foundation for Science, Bilbao, Spain; 
luca.fanelli@ehu.es}
\begin{document}

\date{\small \today}
%%%%%%%%%%%%%%%%%%%%%%%%%%%%%%%%%%%%%%%%%%%%%%%%%%%%%%%%%%%%%%%%%%%%%%%%%%%%%%%%%%%%%%%%%%%%%%%%%%%%%%%%%%%%%%%%%%%%%%%%%%%%%%%%%%%%%%%%%%

\maketitle
%\vspace{-1cm}

%\nocite{*}

%------------abstract------------%

% ---------------------------------%

\begin{abstract}
In this paper, we investigate properties of unique continuation for hyperbolic Schr\"odinger equations with time-dependent complex-valued electric fields and time-independent real magnetic fields. We show that positive masses inside of a bounded region at a single time propagate outside the region and prove gaussian lower bounds for the solutions, provided a suitable average in space-time cylinders is taken.
\end{abstract}
%%%%%%%%%%%%%%%%%%%%%%
\footnotetext{\emph{2020 Mathematics Subject Classification}. 
35A23, 35J10, 35Q41}
\footnotetext{\emph{Keywords}. Hyperbolic Schr\"odinger equation, Uncertainty Principle, Carleman Inequalities, Magnetic Potentials}
%%%%%%%%%%%%%%%%%%%%%%%

\section{Introduction}\label{sec.Introduction}
Let $n\geq2$ and $0< k \leq n$ be integer numbers. For any $v\in\R^n$, let us denote by
\begin{equation}\label{eq:v}
v=:(v_+,v_-)\in\R^k\times\R^{n-k},
\qquad
\widetilde{v}:=(v_+,-v_-) \in \R^n,
\end{equation}
and
consider the 
``hyperbolic'' Laplacian, defined by
$$
L:= \Delta_+-\Delta_-:=\sum_{j=1}^k\frac{\partial^2}{\partial x_j^2}-\sum_{j=k+1}^n\frac{\partial^2}{\partial x_j^2}.
$$
Notice the case $k=n$ reduces to the usual Laplacian.
The time evolutionary Schr\"odinger equation associated to $L$ is the Hyperbolic Schr\"odinger equation
\begin{equation}\label{eq:lilla}
\partial_t u=i(\Delta_+-\Delta_-)u,
\end{equation}
where $u=u(x,t):\R^{n+1}\to\C$.

In this manuscript, we are interested in the study of an electromagnetic perturbation of equation \eqref{eq:lilla} of the form
\begin{equation}\label{eq:main}
  \partial_t u = i (\Delta_{A,+} - \Delta_{A,-}+ V)u,
\end{equation}
where 
\begin{gather*}
u=u(x,t):\R^n \times  [0,1]\to\C,
\\
V=V(x,t)\colon \R^n\times [0,1] \to \C, \\
A = (A^1(x), \dots, A^n(x)) \colon \R^n\to\R^n,
\end{gather*}
 and 
\begin{equation*}
\Delta_{A,+} - \Delta_{A,-}	 = 
\sum_{j = 1}^{k} (\partial_{x_j} - i A_j)^2 
-
\sum_{j = k+ 1}^{n} (\partial_{x_j} - i A_j)^2.
%\begin{pmatrix}
%\partial_{x_1} - i A^1 
%\quad 
%\dots
%\quad 
%\partial_{x_n} - i A^n 
%\end{pmatrix}
%\begin{pmatrix}
%\mathbb{I}_{n_1} & 0 \\
%0 & -\mathbb{I}_{n_2}
%\end{pmatrix}
%\begin{pmatrix}
%\partial_{x_1} - i A^1 
%\\
%\vdots
%\\
%\partial_{x_n} - i A^n 
%\end{pmatrix},
\end{equation*}
The vector field $A$ is interpreted as the \emph{magnetic potential}, while the scalar function $V$ is the \emph{electric potential}.
With the above notation for vectors, we can write $A= (A_+,A_-)\in\R^k\times\R^{n-k}$ and 
   \begin{gather*}
   \nabla_A := \nabla - i A =(\partial_{x_1}-iA^1,\dots,\partial_{x_k}-iA^k,\dots,\partial_{x_n}-iA^n)=: (\nabla_{A,+}, \nabla_{A,-}) \\ 
   \Delta_{A,+} - \Delta_{A,-} = (\nabla_{A,+}\cdot\nabla_{A,+})  - (\nabla_{A,-} \cdot\nabla_{A,-}).
\end{gather*}
The \emph{magnetic field} $B=B(x) : \Rn  \to
M_{n\times n}(\R)$ is the antisymmetric gradient of $A$,
given by
\begin{equation}\label{eq:Bdef}
  B(x)=D_x A(x)-D_x A^t(x),
  \qquad
  B_{jk}(x)=\partial_{x_j} A^k(x)- \partial_{x_k} A^j(x).
\end{equation} 
In dimension $n=3$, where antisymmetric matrices are identified with 3-vectors,  we have
$$
v^tB = \text{curl}A\wedge v,
\qquad(n=3)
$$
for any $v\in\R^3$, 
being the wedge $\wedge$  the vectorial product,
which motivates the usual identification $B=\text{curl}A$.
\vskip0.3cm
Compared with the usual elliptic Schr\"odinger case $(k=n)$, equation \eqref{eq:lilla}, as well as its power-like nonlinear versions, has some peculiar features. First of all notice that the $L^2$--norm is an invariant of the time flow, as in the elliptic case, while at the level of one derivative we have the formal conservation of the \textit{linear energy}, which is an unsigned quantity, given by
\begin{equation}\label{eq:lillalilla}
\int_{\R^n}\left[ |\nabla_+u(x,t)|^2 - |\nabla_-u(x,t)|^2 \right]\,dx,
\end{equation}
over solutions $u$ to \eqref{eq:lilla}. As a consequence, since even the linear energy does not control from above any positive quantity, several basic questions (mostly concerned with the global well posedness) about nonlinear models associated to \eqref{eq:lilla} are completely open, as well as very challenging. As an example, Ghidaglia and Saut proved in \cite{GS} that the nonlinear equation
\begin{equation}\label{eq:Nlilla}
\partial_t u=i(\Delta_+-\Delta_-)u\pm|u|^pu,
\end{equation}
has no traveling wave solutions in $H^1$, independently on the sign of the nonlinear term (due to \eqref{eq:lillalilla}, there is no notion here of {\it focusing} or {\it defocusing}). Among the other nonlinear models based on \eqref{eq:lilla}, we mention the well known {\it Davey-Stewartson} system in $\R^{2+1}$
$$
\begin{cases}
i\partial_tu+\frac{\partial^2u}{\partial x^2}-\frac{\partial^2u}{\partial y^2}=\alpha|u|^2u+\beta u\frac{\partial\varphi}{\partial x}
\\
\Delta\varphi =\frac{\partial}{\partial x}|u|^2,
\end{cases}
$$
where $u,\varphi:\R^{2+1}\to\C,\ u=u(x,y,t),\ \varphi=\varphi(x,y,t)$.
\vskip0.3cm
The $0$-order potential $V$ of equation \eqref{eq:main} is somehow natural from the point of view of nonlinear analysis, since one can always think to the case of a nonlinear function of $u$, namely $V(x,t) = f(u(x,t))$. On the other hand, the magnetic perturbation $A$ in \eqref{eq:main} is quite natural from a geometric point of view, and the hyperbolic nature of the operator $L$ makes matters quite tricky, as we will see in the sequel. In this manuscript we follow a program which we started in \cite{BCF}, in which we investigate about quantitative informations on how the {\it mass} propagates under the evolution flow, in suitable space regions. In particular, in \cite{BCF}, inspired by \cite{agirre2019some}, we are able to describe the mass propagation for equation \eqref{eq:main} with $k=n$ in presence of a suitable magnetic field satisfying some geometric conditions. 
In the present paper, we improve the results in \cite{BCF}, weakening the required assumptions on the magnetic field, and 
we complete the treatment of \eqref{eq:main}, by considering the hyperbolic case $0<k<n$.
\vskip0.3cm
The basic ideas in this topic come from the recent developments on unique continuation properties at two distinct times for solutions to Schr\"odinger equations, in connection with the mathematical manifestations of the Uncertainty Principle from Fourier Analysis. It is well known that, 
if $u(x,0)=O\left(e^{-|x|^2/\beta^2}\right)$ and $u(x,T):=e^{iT\Delta}u(x,0)=
O\left(e^{-|x|^2/\alpha^2}\right)$, then
\begin{align*}
  \alpha\beta<4T
  &
  \Rightarrow
  u\equiv0
  \\
  \alpha\beta=4T
  &
  \Rightarrow
  u(x,0)\ is\ a \ constant\ multiple\ of\ e^{-\left(\frac{1}{\beta^2}+\frac{i}{4T}\right)|x|^2}.
\end{align*}
The corresponding $L^2$-versions of the previous results were proved in \cite{CP} and affirm the following:
\begin{align*}
  e^{|x|^2/\beta^2}f \in L^2,\  e^{4|\xi|^2/\alpha^2}\hat f \in L^2,\
  \alpha\beta\leq4
  &
  \Rightarrow
  f\equiv0
  \\
  e^{|x|^2/\beta^2}u(x,0)\in L^2,\  e^{|x|^2/\alpha^2}e^{iT\Delta}u(x,0) \in L^2,\
  \alpha\beta\leq4T
  &
  \Rightarrow
  u\equiv0.
\end{align*}
We address the reader to \cite{BD, FS, SST, SS} as standard references about this topic.
When the coefficients are not constants, it is usually difficult to involve the Fourier Transform. In the recent years,   
Escauriaza, Kenig, Ponce, and Vega
in the sequel of papers \cite{EKPV0,EKPV1,EKPV2,EKPV3,EKPV4, escauriaza2012uniqueness},
and  with Cowling in \cite{CEKPV} developed purely real analytical
methods to handle the above problems, which permits to obtain sharp
answers for 0-order perturbations of the linear Schr\"odinger equation. Some analogous results have also
been obtained by the authors of the present manuscript in \cite{BFGRV,
  CF1}, in the presence of a non-trivial magnetic fields and in \cite{CF2}, where harmonic oscillators and uniform magnetic fields are considered.
In this direction we refer to \cite{KOR}, where related results are given for the harmonic oscillator, and to  \cite{K}, where the Hardy Uncertainty Principle for general electric second order perturbations is considered. For a treatment in the general elliptic setting with variable coefficients, we refer to \cite{FLY}.
We also mention \cite{FernV, FRRS, JLMP}, where analogous phenomena
are considered for discrete Schr\"{o}dinger evolutions. 
Finally, we refer to the recent survey \cite{FM} for more details and
references to further results.
\vskip0.3cm
\medskip \noindent
The contribution by Agirre and Vega in \cite{agirre2019some}, motivated by the results in \cite{N}, is to answer to similar questions when the decay is assumed at only one time, instead of two. Roughly speaking, they prove the following: if a positive mass is present, for solutions of \eqref{eq:main} with $A\equiv0$ and $V$ bounded, inside of some region (a ball) at one time, then one also observes this mass outside the region, if a suitable time average is taken. This fact can be mathematically translated into a gaussian lower bound for solutions in suitable space-time cylinders. A crucial role in the argument is played by a Carleman estimate, which gives a fundamental bound from below.
\vskip0.3cm
\medskip \noindent
As we saw in \cite{BFGRV, CF1, CF2}, the presence of a magnetic field can produce interesting phenomena in the elliptic case. We are now ready to complete the picture, proving our main result in the hyperbolic case.

\vskip0.3cm
\medskip \noindent

%%%%%%%%%%%%%%%%%%
%%%%%%%%%%%%%%%%%%
%%%%%%%%%%%%%%%%%%
%%%%%%%%%%%%%%%%%%
%%%%%%%%%%%%%%%%%%
%%%%%%%%%%%%%%%%%%
%%%%%%%%%%%%%%%%%%
%%%%%%%%%%%%%%%%%%
%%%%%%%%%%%%%%%%%%
%%%%%%%%%%%%%%%%%%

\begin{theorem}\label{thm:main}
Let $n\geq 2$, $u\in\mathcal{C}([0,1];H^1_{loc}(\mathbb{R}^n))$ be a solution of \eqref{eq:main},
and assume that
\begin{equation}\label{eq:V.bound}
\norm{V}_{L^{\infty}(\mathbb{R}^n\times[0,1])} =: M_V <+\infty,
\end{equation}
\begin{equation}\label{eq:condition.A}
  \int_0^1A(sx)\,ds\in\R^n
  \qquad
  \text{for a.e. }x\in\R^n,
\end{equation}
\begin{equation}\label{eq:decay.B}
 \norm{x^t B}_{L^\infty(\R^n)} + \norm{\widetilde x^tB}_{L^\infty(\R^n)} =: M_B < +\infty,
\end{equation}
being $\widetilde x$ like in \eqref{eq:v}.
Assume moreover that there exists a unit vector $\xi \in \mathbb{S}^{n-1}$ such that 
  \begin{equation}  \label{eq:B.null}
  \norm{\xi^t B}_{L^\infty(\R^n)} =: M_\xi < +\infty.
\end{equation}
Finally assume that there exist $R_0, R_1, M_u>0$,  such that $R_1 > 4(R_0+1)$ and 
\begin{equation}\label{eq:initial.mass}
\int_{\abs{x}\leq R_0} \abs{u(x,0)}^2 dx =: M_u^2,
\end{equation}
\begin{equation}\label{eq:norm.bound}
  \sup_{0\leq t \leq 1} \int_{\abs{x}\leq R_1}
  \left(|u(x,t)|^2+|\nabla_A u(x,t)|^2 \right) dx =: E_u^2 < +\infty.
\end{equation}
Then, there exist $t^\ast = t^\ast(R_0, R_1, M_V, M_B, M_\xi,  M_u, E_u)>0$
and $C = C(M_B) >0$ such that
\begin{equation}\label{eq:thesis}
  M_u^2 \leq C\frac{ e^{C \frac{\rho^2}{t}}}{t}\int_{t/4}^{3t}
  \int_{||y|-\rho-\rho\frac{s}{t}|<4(\rho+1)\sqrt{t}}
  \left(\vert u(y,s)\vert^2 +
  s\vert \nabla_{A} u(y,s) \vert^2\right)\,dyds, \ \ \ \ \  R_0 \leq \rho \leq R_1,
\end{equation}
for any $t\in(0,t^\ast)$.
\end{theorem}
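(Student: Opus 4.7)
The plan is to adapt the strategy of \cite{BCF, agirre2019some} to the hyperbolic magnetic setting by combining a gauge reduction, a Carleman inequality for the operator $\mathcal{L} := \partial_t - i(\Delta_{A,+} - \Delta_{A,-} + V)$, and a localization argument on a cut-off supported in the moving annular shell appearing in \eqref{eq:thesis}. Using \eqref{eq:condition.A}, one first sets $\psi(x) := -\int_0^1 A(sx)\cdot x\, ds$ and replaces $u$ by $w := e^{i\psi}u$; then $w$ solves \eqref{eq:main} with a new magnetic potential $A' := A + \nabla\psi$ satisfying the transversal (Poincar\'e) gauge $x\cdot A'(x) = 0$, while the magnetic field $B$ is unchanged and thus \eqref{eq:decay.B}--\eqref{eq:B.null} still hold. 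This normalization removes the curl-free part of $A$ and simplifies the forthcoming commutator computations.

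The technical core is an $L^2$ Carleman estimate with Gaussian weight $e^{\phi(x,t)}$ of the form
\[
\phi(x,t) = \gamma\,\frac{|x-x_0|^2}{a(t)},
\]
where $x_0\in\R^n$ and the positive function $a$ are calibrated so that the conjugated operator $e^\phi \mathcal{L} e^{-\phi}$ splits as $\mathcal{S} + \mathcal{A}$ into symmetric and antisymmetric parts with $\operatorname{Re}\langle [\mathcal{S},\mathcal{A}] f, f\rangle$ bounded below, modulo lower-order errors absorbed on the left via \eqref{eq:V.bound}--\eqref{eq:B.null}. The indefinite signature of $L$ forces a split-sign analysis: the conjugation produces two ``half-Laplacian'' blocks with opposite signs, so the commutator bilinear form is not automatically positive, and the three decay hypotheses $\norm{x^tB}_\infty$, $\norm{\widetilde x^tB}_\infty$ and $\norm{\xi^tB}_\infty$ enter in different terms produced by this splitting, while $V$ is controlled via \eqref{eq:V.bound} for small $t$.

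The Carleman estimate is then applied to $\eta w$, where $\eta(x,t)$ is a smooth cut-off supported in the moving shell $\{||y|-\rho-\rho s/t|<4(\rho+1)\sqrt t\}$ for $s\in[t/4,3t]$, with $\eta(\cdot,0)\equiv 1$ on $\{|x|\le R_0\}$ (geometrically compatible thanks to $R_1>4(R_0+1)$). The left-hand side dominates $e^{\phi(0,0)}M_u^2$ via \eqref{eq:initial.mass}, while the right-hand side collapses to $\norm{e^\phi[\mathcal{L},\eta]w}_{L^2}^2$ plus standard boundary contributions: since $[\mathcal{L},\eta]$ is a first-order differential operator supported where $\nabla\eta\neq 0$, which lies in the target shell, its contribution is controlled by $|w|^2+|\nabla_{A'}w|^2$ on that shell and bounded via \eqref{eq:norm.bound}. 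Optimizing $\gamma$ and $a$ in terms of $\rho$ and $t$ produces the prefactor $e^{C\rho^2/t}/t$ and yields \eqref{eq:thesis}.

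The main obstacle is the Carleman estimate itself: the indefinite signature of $L$ destroys the automatic positivity exploited by the elliptic Carleman inequality of \cite{BCF}, and nonnegativity of $\operatorname{Re}\langle[\mathcal{S},\mathcal{A}]f,f\rangle$ must be extracted by a simultaneous use of the three hypotheses \eqref{eq:decay.B}--\eqref{eq:B.null}. The role of the one-directional condition $\xi^tB\in L^\infty$ is to control the unique residual lower-order term in the commutator that cannot be handled by $x^tB$ and $\widetilde x^tB$ alone; this is precisely where the hyperbolic case diverges from its elliptic counterpart.
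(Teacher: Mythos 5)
Your overall architecture (Cr\"onstrom gauge reduction, a Carleman inequality, a cut-off whose commutator lives on the target shell) matches the paper's in spirit, and your gauge step is exactly Lemma \ref{3lem:cronstrom1}. But there are genuine gaps. First, you have no substitute for the Appell pseudoconformal transformation (Lemma \ref{lem:appell}), which is the paper's mechanism for passing from the shrinking window $[t/4,3t]$ and the outward-drifting shell to a \emph{fixed} problem: conjugating by \eqref{3eq:appell} with $\gamma=t^{-1}$ large turns the shell into a fixed annulus $R\le|x|\le R+1$ with $R=\rho\sqrt\gamma$ on the fixed time interval $[0,1]$, so that Proposition \ref{lem:carleman} can be proved once with a time-independent weight amplitude. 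Your weight $\gamma|x-x_0|^2/a(t)$ is meant to absorb this rescaling directly, but you never say how $a$ is chosen or why the conjugated commutator stays coercive uniformly as $t\to0$; that is where the work is. Second, your weight has a \emph{fixed} center, whereas the paper's weight $e^{\tau|x/R+\varphi(t)\widetilde\xi|^2}$ has a center translated in time along $\widetilde\xi$ by a bump $\varphi$. This is not cosmetic: it forces the localized function $g=\theta_R\,\eta(x/R+\varphi\widetilde\xi)\,v$ to vanish identically for $t\in[0,1/4]\cup[3/4,1]$, so $g$ is compactly supported in time, no time cut-off is needed, and no $\partial_t\eta$ commutator or $t=0$ boundary term appears. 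With a static center and a cut-off satisfying $\eta(\cdot,0)\equiv1$ on the ball, $[\mathcal L,\eta]$ contains a $\partial_t\eta$ term supported \emph{inside} the ball at the temporal edges of the window, which cannot be bounded by the shell integral in \eqref{eq:thesis}. The moving center is also precisely why hypothesis \eqref{eq:B.null} is needed: it generates the commutator term $\varphi\,\xi^tB\cdot\nabla_A f$, the ``residual'' term you allude to.

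Third, the lower-bound step as you state it is not legitimate: the left-hand side of the Carleman inequality is a space-time norm of a function supported away from $t=0$, so it does not ``dominate $e^{\phi(0,0)}M_u^2$'' directly. The paper bridges this gap with an almost-conservation of local mass: it writes $|u(y,s)|^2-|u(y,0)|^2$ as a time integral of divergence and potential terms and bounds the resulting error by $CE_u^2(1+M_V)/\gamma^{3/2}$ using \eqref{eq:norm.bound}; this is exactly where the smallness of $t^\ast$ in terms of $M_u,E_u,M_V$ originates, and it is absent from your sketch. A smaller inaccuracy: positivity of $\Re\langle[\mathcal S,\mathcal A]f,f\rangle$ does not come from a delicate simultaneous cancellation among the three curvature hypotheses. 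The leading positive term is produced by the weight alone and survives the hyperbolic signature because $|x_+/R+\varphi\xi_+|^2+|x_-/R-\varphi\xi_-|^2=|x/R+\varphi\widetilde\xi|^2$; the magnetic contributions are separate error terms, each absorbed for $\tau\ge cR^2$, and the hypothesis $x^tB\in L^\infty$ is not used in the Carleman estimate at all --- it is needed only to bound $\norm{\partial_t\widetilde A}_{L^\infty}$ for the time-dependent potential created by the Appell transformation in the Cr\"onstrom gauge.
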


\begin{remark}\label{rem:1}
The assumptions of Theorem \ref{thm:main} are gauge invariant. The condition \eqref{eq:condition.A}  has to be understood as a necessary local integrability condition, in order to get the freedom to choose the so called {\it Cr\"onstrom gauge} (see Section \ref{3sec:cronstrom} below). 
We remark that condition \eqref{eq:condition.A} is not satisfied in the case of homogeneous vector potentials $A$ of degree $-1$, namely the case when the Hamiltonian $\Delta_A$ is scaling invariant. A well known example is given by the {\it Aharonov-Bohm}-type potential $A(x) = \lambda(0,\dots,0,-x_{n},x_{n-1})/(x_{n-1}^2+x_n^2)\in\R^n$, for which the validity of Theorem \ref{thm:main} is still an open question.
\end{remark}
\begin{remark}\label{rem:2}
  The choice of the time interval $[0,1]$ does
  not lead the generality of the results. Indeed, $v\in
  C([0,T],L^2(\Rn))$ 
  is solution to \eqref{eq:main} in \mbox{$\Rn\times[0,T]$}
  if and only if $u\colon \R^n \times [0,1]\to \C$, 
  $u(x,t)=%T^{\frac{n}{4}}
  v(\sqrt{T}x,Tt)$ is solution to
\begin{equation*}
  \partial_t u = i(\Delta_{A_T,+}u - \Delta_{A_T, -}u + V_T (x,t) u)
  \quad \text{ in }\Rn \times [0,1],
\end{equation*} 
where
\begin{equation*} 
A_T(x)=\sqrt{T}A(\sqrt{T}x),\quad V_T(x,t)=T
  V(\sqrt{T}x,Tt).
\end{equation*} 
\end{remark} 
\begin{remark}
Notice that in dimension $n=2$ assumption \eqref{eq:B.null} is equivalent to the requirement $B\in L^\infty(\R^n)$, since the antisymmetric matrix $B$ is identified with a scalar function in this case. Analogously, in dimension $n=2$, assumption \eqref{eq:decay.B} is equivalent to the requirement $|x||B|\in L^{\infty}(\R^n)$, hence we can resume the two above conditions \eqref{eq:decay.B}, \eqref{eq:B.null} in the unified assumption $\langle x\rangle B\in L^\infty(\R^n),\ (n=2)$.  
On the other hand, in higher dimensions $n\geq3$, assumption \eqref{eq:B.null} allows to consider singular magnetic fields: indeed, it is easy to construct unbounded antisymmetric matrices that vanish (and so are bounded) on some fixed directions, cfr. \cite[Remark 1.3]{BFGRV}.
\end{remark}

%
%
%
%========================================
%
%
%
%
%
%
%
%
%
%In the case that $n=2$,  assumption \eqref{eq:B.null} is equivalent to the requirement $B \in L^\infty(\R^n)$, since 
%$B$ is an anti-symmetric $2\times2$-matrix. In fact, in  the general dimensional case, if the magnetic field is bounded it is possible to weaken the assumption \eqref{eq:decay.B}: we complement \Cref{thm:main} with the following proposition.
%\begin{proposition}\label{prop:boundedB}
%Let $n\geq2$, $u\in\mathcal{C}([0,1];H^1_{loc}(\mathbb{R}^n))$ be a solution of \eqref{eq:main},
%and assume \eqref{eq:V.bound}, \eqref{eq:condition.A} and 
%\begin{align}
%\label{eq:xBbounded}
%& \norm{x^t B}_{L^\infty(\R^n)} =:N_{B} < +\infty,
%\\
%\label{eq:Bbounded}
%& \norm{B}_{L^\infty(\R^n)} =: N_{B,\infty} < +\infty.
%\end{align}
%Assume moreover \eqref{eq:initial.mass} and \eqref{eq:norm.bound}. Then \eqref{eq:thesis} holds true.
%\end{proposition}
\begin{remark}
\Cref{thm:main} improves the known results for the usual Schr\"odinger equation
\begin{equation*}
  \partial_t u = i (\Delta_{A} + V)u,
\end{equation*}
(namely the case $k=n$), which has been treated by the authors in the previous work \cite{BCF}. 
Indeed, the analogous result (Theorem 1.1 of \cite{BCF}) is stated under the stronger degeneracy  assumption
\begin{equation*}
\xi^t B (x) = 0 \quad \text{ for a.e. }x\in \R^n,
\end{equation*}
which forces to consider vanishing magnetic fields in dimensions $n=2$. On the contrary, assumption \eqref{eq:B.null} permits us here to prove also a 2D-result, as already remarked above, for bounded magnetic fields.
\end{remark}

As an immediate corollary of the main theorem, we prove the following uniqueness result for small times.
\begin{corollary}
Let $n\geq2$, $u\in\mathcal{C}([0,1];H^1_{loc}(\mathbb{R}^n))$ be a
  solution to \eqref{eq:main}, and let the assumptions of Theorem \eqref{thm:main} be satisfied.
  \begin{itemize}
  \item If there exist $(R_j)_{j \in \N}$, $R_j \to +\infty$ such
      that  for all $j \in \N$
\begin{equation*}
\lim_{t \to 0} C\frac{e^{C \frac{R_j ^2}{t}}}{t} \int_{t/4}^{3t}\int_{||y| - R_j(1+s/t)| < 4(R_j+1)\sqrt{t}} 
\left(|u(y,s)|^2+s\left|\nabla u(y,s)\right|^2\right) dy ds=0,
\end{equation*}
then $u\equiv 0$;
\item  if there
  exists $(t_j)_{j \in \N}\subset(0,t^*)$, $t_j \to 0$ such that for all $j
  \in \N$
  \begin{equation*}
    \lim_{\rho \to +\infty}
    C \frac{e^{C \frac{\rho^2}{t_j}}}{t_j} \int_{t_j/4}^{3t_j}\int_{||y| - \rho(1+s/t_j)| < 4(\rho+1)\sqrt{t_j}} 
  \left(|u(y,s)|^2+s\left|\nabla u(y,s)\right|^2 \right) dy ds=0,
\end{equation*}
then $u\equiv 0$.
\end{itemize}
\end{corollary}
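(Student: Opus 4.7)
The plan is to derive both claims directly from Theorem~\ref{thm:main} by a contradiction argument. Suppose $u\not\equiv 0$. Since $V$ is bounded by \eqref{eq:V.bound} and $A$ satisfies \eqref{eq:condition.A}--\eqref{eq:decay.B}, the linear equation \eqref{eq:main} is forward well-posed in $\mathcal{C}([0,1];H^1_{\mathrm{loc}}(\R^n))$ via a standard energy estimate; hence vanishing of $u(\cdot,0)$ would force $u\equiv 0$, and we may pick $R_0>0$ with
\begin{equation*}
M_u^2:=\int_{|x|\leq R_0}|u(x,0)|^2\,\dx>0.
\end{equation*}
For any $R_1>R_0$, the continuity of $u$ into $H^1_{\mathrm{loc}}(\R^n)$ automatically ensures that $E_u^2$ in \eqref{eq:norm.bound} is finite, so the hypotheses of Theorem~\ref{thm:main} are in force.

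For the first claim, I would pick $j_0\in\N$ so that $R_j>4(R_0+1)$ for all $j\geq j_0$, and apply Theorem~\ref{thm:main} with $R_1:=R_j$ and $\rho:=R_j$. Estimate~\eqref{eq:thesis} then reads
\begin{equation*}
M_u^2\leq C\frac{e^{CR_j^2/t}}{t}\int_{t/4}^{3t}\int_{||y|-R_j(1+s/t)|<4(R_j+1)\sqrt{t}}\left(|u(y,s)|^2+s|\nabla_A u(y,s)|^2\right)\dx ds
\end{equation*}
for every $t\in(0,t^\ast)$. Letting $t\to 0^+$, the right-hand side vanishes by the standing assumption of the first bullet, contradicting $M_u>0$.

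The second claim is symmetric in the roles of $t$ and $\rho$: for each fixed $j$ I would apply Theorem~\ref{thm:main} with $t:=t_j$ and $R_1:=\rho$ for $\rho>4(R_0+1)$, and then send $\rho\to+\infty$. Once more, the right-hand side of \eqref{eq:thesis} is forced to zero by the hypothesis, yielding the same contradiction with $M_u>0$.

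The only mildly delicate step I expect is admissibility in the second case: since $t^\ast=t^\ast(R_0,R_1,M_V,M_B,M_\xi,M_u,E_u)$ depends on $R_1=\rho$, one must verify that $t_j<t^\ast(\rho)$ holds along a sequence $\rho\to+\infty$. Because the hypothesis is phrased as a $\rho$-limit at each fixed $j$, it suffices to exhibit arbitrarily large admissible $\rho$'s, and a standard diagonal extraction over $(j,\rho)$ closes the argument. The auxiliary ingredient of forward uniqueness for \eqref{eq:main} with coefficients satisfying \eqref{eq:V.bound}--\eqref{eq:decay.B} is classical via the energy method, so no new estimate beyond Theorem~\ref{thm:main} is required.
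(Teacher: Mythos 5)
Your overall strategy --- argue by contradiction and feed the limit hypotheses into \eqref{eq:thesis} --- is exactly what the paper intends (it offers no proof, calling the corollary immediate), and your treatment of the first bullet is correct: fix $R_0$ with positive mass, take one $j$ with $R_j>4(R_0+1)$, apply \Cref{thm:main} with $R_1=\rho=R_j$, and let $t\to0^+$. Two points, however, do not hold up as written. First, the reduction ``$u(\cdot,0)\equiv0\Rightarrow u\equiv0$'' is not ``classical via the energy method'' in the class $\mathcal{C}([0,1];H^1_{loc}(\mathbb{R}^n))$: energy identities require control of boundary terms at infinity, and forward uniqueness in purely local classes fails in general (Tychonoff-type examples). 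You can sidestep this entirely by reading the corollary literally: the standing assumptions of \Cref{thm:main} already include $M_u>0$ for some $R_0$, so the contradiction is with that hypothesis and no separate uniqueness statement for the Cauchy problem is needed.

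The more serious issue is the admissibility problem in the second bullet, which you flag but do not resolve. In the proof of \Cref{thm:main} one has $t^\ast=(\gamma^\ast)^{-1}$ with $\gamma^\ast\geq 64^2E_u^4(1+M_V)^2/M_u^4$ by \eqref{eq:gamma.big}, and $E_u=E_u(R_1)$ is only assumed finite for each fixed $R_1$; for a merely $H^1_{loc}$ solution it may blow up as $R_1\to\infty$, so $t^\ast(\rho)\to0$ and, for a fixed $t_j$, the set of admissible $\rho$ (those with $t_j<t^\ast(\rho)$) is in general bounded. The hypothesis of the second bullet only gives smallness of the right-hand side of \eqref{eq:thesis} for $\rho$ \emph{large}, i.e.\ possibly outside the admissible range. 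Your proposed diagonal extraction runs in a circle: to make a large $\rho_0$ admissible you must pass to a larger $j$, but then the $\rho$-threshold at which the limit hypothesis for that new $j$ yields smallness may again exceed the admissible range determined by $t_j$. To close the argument one needs either a lower bound on $t^\ast(\rho)$ uniform in $\rho$ (available, for instance, when $E_u(R_1)$ is bounded in $R_1$, e.g.\ for $u\in\mathcal{C}([0,1];H^1(\mathbb{R}^n))$ globally --- then pick $j$ with $t_j$ below that bound and send $\rho\to\infty$) or some additional quantitative input; as written, the second bullet is not proved.
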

The main ingredient of the proof of Theorem \ref{thm:main} is the following Carleman estimate, that we state here because it is of independent interest.
\begin{proposition}\label{lem:carleman}
  Let $n\geq2$, $R>1$ and $\varphi : [0,1] \to \R$ a smooth function.
  Let 
  \begin{equation*}
  \begin{split}
  & A=A(x,t):\Rn \times [0,1] \to\R^n, \\
  & B:=D_x A-D_x A^t = B(x,t):\Rn \times [0,1] \to \R^{n\times n}
  \end{split}
  \end{equation*}
    and assume that, being $\widetilde x := (x_+, -x_-)$,
\begin{gather}\label{eq:cronstrocond}
%{\color{blue}  x\cdot \partial_t A(x,t) = 0, \quad \text{ for a.e. }x \in \R^n, t \in [0,1], \quad \text{ maybe we don't need it}}\\
\norm{\partial_t A}_{L^{\infty} (\R^n \times [0,1])} < +\infty, \\
  \label{eq:decay.B.carleman}
  \norm{\widetilde x^t B}_{L^\infty(\R^{n}\times [0,1])}   < +\infty.
\end{gather}
Moreover, assume that
  there exists a unitary vector $\xi  = (\xi_+, \xi_-) \in \mathbb{R}^{n}$, $|\xi|=1$, such that 
\begin{equation}
  \label{eq:degenerateB}
%   w\cdot \partial_t A(x,t) = 0, \quad
  \norm{\xi^t B }_{L^\infty(\R^n \times [0,1])} < +\infty.
\end{equation}
Then, being $\widetilde\xi := (\xi_+, -\xi_-)$,
  \begin{equation}\label{eq:Carleman}
    \frac{\tau^{3/2}}{c R^2}
    \norm*{e^{\tau\abs{\frac{x}{R}+\varphi(t) \widetilde{\xi} }^2}g}_{L^2(\R^{n}\times[0,1])}
    \leq
    \norm*{e^{\tau\abs{\frac{x}{R}+\varphi(t) \widetilde{\xi} }^2} (i \partial_t +
      \Delta_{A,+} - \Delta_{A,-})g}_{L^2(\R^{n}\times[0,1])}
  \end{equation}
  for all $g \in
  C_c^{\infty}(\Rn\times [0,1])$ with
  \begin{equation}\label{eq:supp}
    \supp g \subset \left\{ (x,t) \in \R^{n}\times [0,1] :
      \left\vert \frac{x}{R}+\varphi(t) \widetilde{\xi} \right\vert \geq 1  \right\}
  \end{equation}
  and for all $ \tau \geq c R^2 $, being $c \in \R$ such that
  \begin{equation}\label{eq:big.c}
  \begin{split}
  c \geq 
  & \left(
  \norm{\varphi''}_{L^\infty([0,1])} +
   \norm{\varphi'}_{L^\infty([0,1])}^2
  + \norm{\partial_t A}_{L^{\infty}(\R^n \times [0,1])} \right.
  \\
  & \quad   \left.
     +  \norm{\widetilde x^t B}_{L^{\infty}(\R^n \times [0,1])} ^2 
     + \norm{\varphi}_{L^\infty([0,1])}^2 \norm{\xi^t B}_{L^\infty(\R^n \times [0,1])}^2   + 1 \right)^{\frac12}.
     \end{split}
  \end{equation}
\end{proposition}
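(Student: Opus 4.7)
The plan is to use the standard conjugation-and-commutator method for Carleman estimates, the hyperbolic signature being tamed by the specific shape of the weight $\phi(x,t)=|x/R+\varphi(t)\widetilde{\xi}|^{2}$. Introduce $y:=x/R+\varphi(t)\widetilde{\xi}$ and the signs $\varepsilon_{j}:=+1$ for $j\le k$, $\varepsilon_{j}:=-1$ for $k<j\le n$, so that $\partial_{x_{j}}\phi=2y_{j}/R$, $\partial_{x_{j}}\partial_{x_{k}}\phi=2\delta_{jk}/R^{2}$, and $L_{A}:=\Delta_{A,+}-\Delta_{A,-}=\sum_{j}\varepsilon_{j}\partial_{A,j}^{2}$ with $\partial_{A,j}:=\partial_{x_{j}}-iA_{j}$. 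Setting $f:=e^{\tau\phi}g$ and applying $e^{\tau\phi}\partial_{A,j}e^{-\tau\phi}=\partial_{A,j}-\tau\partial_{x_{j}}\phi$, a direct calculation yields
\begin{equation*}
P_{\phi}:=e^{\tau\phi}(i\partial_{t}+L_{A})e^{-\tau\phi}
=i\partial_{t}-i\tau\partial_{t}\phi+L_{A}-2\tau T+\tau^{2}Q(\nabla\phi)-\tau Q(D^{2}\phi),
\end{equation*}
with $T:=\sum_{j}\varepsilon_{j}(\partial_{x_{j}}\phi)\partial_{A,j}$, $Q(\nabla\phi):=\sum_{j}\varepsilon_{j}(\partial_{x_{j}}\phi)^{2}$ and $Q(D^{2}\phi):=\sum_{j}\varepsilon_{j}\partial_{x_{j}}^{2}\phi$. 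Since $\partial_{A,j}^{*}=-\partial_{A,j}$ implies $T^{*}=-T-Q(D^{2}\phi)$, the $L^{2}$-splitting of $P_{\phi}$ into self-adjoint and anti-self-adjoint parts is
\begin{equation*}
\mathcal{S}=i\partial_{t}+L_{A}+\tau^{2}Q(\nabla\phi),\qquad\mathcal{A}=-i\tau\partial_{t}\phi-2\tau T-\tau Q(D^{2}\phi).
\end{equation*}

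The classical identity $\|P_{\phi}f\|_{L^{2}}^{2}=\|\mathcal{S}f\|_{L^{2}}^{2}+\|\mathcal{A}f\|_{L^{2}}^{2}+([\mathcal{S},\mathcal{A}]f,f)_{L^{2}}$ reduces the problem to a coercive lower bound on $([\mathcal{S},\mathcal{A}]f,f)$. The leading positive contribution is
\begin{equation*}
[\tau^{2}Q(\nabla\phi),-2\tau T]=\frac{8\tau^{3}}{R^{2}}|\nabla\phi|^{2}=\frac{32\tau^{3}}{R^{4}}|y|^{2}\ \ge\ \frac{32\tau^{3}}{R^{4}}\quad\text{on }\supp g,
\end{equation*}
positivity being preserved despite the hyperbolic signature because the signs only enter as $\varepsilon_{j}^{2}=1$. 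A secondary positive contribution comes from $[L_{A},-2\tau T]$, whose principal part simplifies (again via $\varepsilon_{j}^{2}=1$, the Kronecker delta killing the indefinite sum) to $-8\tau R^{-2}\sum_{j}\partial_{A,j}^{2}$, pairing with $f$ to give $8\tau R^{-2}\|\nabla_{A}f\|_{L^{2}}^{2}$. Together these two terms form the coercive skeleton of the estimate.

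The main obstacle is to absorb all remaining contributions to $[\mathcal{S},\mathcal{A}]$, and the most delicate are the magnetic errors arising from $[\partial_{A,j},\partial_{A,k}]=iB_{jk}$ inside $[L_{A},-2\tau T]$. A computation produces the first-order magnetic piece
\begin{equation*}
-\frac{8i\tau}{R}\sum_{k}\varepsilon_{k}(\widetilde{y}^{\,t}B)_{k}(\partial_{A,k}f,f)_{L^{2}},\qquad\widetilde{y}=\widetilde{x}/R+\varphi\,\xi,
\end{equation*}
together with a zeroth-order contribution in $\partial_{x_{k}}B_{jk}$ that, after integration by parts and use of $B_{jj}=0$, reduces to the same first-order form. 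This is precisely where the choice of $\widetilde{\xi}$ in $\phi$ pays off: since $\widetilde{\widetilde{\xi}}=\xi$, one has $\widetilde{y}=\widetilde{x}/R+\varphi\,\xi$, so hypotheses \eqref{eq:decay.B.carleman}--\eqref{eq:degenerateB} yield $\|\widetilde{y}^{\,t}B\|_{\infty}\le R^{-1}\|\widetilde{x}^{\,t}B\|_{\infty}+\|\varphi\|_{\infty}\|\xi^{t}B\|_{\infty}$. A Cauchy--Schwarz splitting then produces one piece $\lesssim\tau R^{-2}\|\nabla_{A}f\|_{L^{2}}^{2}$, absorbed by the secondary positive term, and one of size $\lesssim\tau(\|\widetilde{x}^{\,t}B\|_{\infty}^{2}R^{-2}+\|\varphi\|_{\infty}^{2}\|\xi^{t}B\|_{\infty}^{2})\|f\|_{L^{2}}^{2}$, absorbed by the main $\tau^{3}R^{-4}\|f\|_{L^{2}}^{2}$ term as soon as $\tau\ge cR^{2}$ with $c$ of the form \eqref{eq:big.c}. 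The time-dependent errors $[L_{A},-i\tau\partial_{t}\phi]$, $[i\partial_{t},-i\tau\partial_{t}\phi]=\tau\partial_{t}^{2}\phi$, and the term $-\tfrac{4\tau}{R}\sum_{j}\varepsilon_{j}y_{j}\partial_{t}A_{j}$ inside $[i\partial_{t},-2\tau T]$ are controlled analogously, calling on $\|\varphi'\|_{\infty}$, $\|\varphi''\|_{\infty}$ and $\|\partial_{t}A\|_{\infty}$, all of which appear in \eqref{eq:big.c}. Summing all contributions delivers $([\mathcal{S},\mathcal{A}]f,f)\ge c'\tau^{3}R^{-4}\|f\|_{L^{2}}^{2}$, hence $\|P_{\phi}f\|_{L^{2}}\ge (c')^{1/2}\tau^{3/2}R^{-2}\|f\|_{L^{2}}$, and rewriting in terms of $g=e^{-\tau\phi}f$ gives \eqref{eq:Carleman}.
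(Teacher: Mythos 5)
Your proposal is correct and follows essentially the same route as the paper: conjugate by $e^{\tau\phi}$ with $\phi=|x/R+\varphi(t)\widetilde\xi|^2$, split into symmetric and antisymmetric parts, drop the two squares, and bound the commutator from below, with the positive terms $32\tau^3R^{-4}\int|y|^2|f|^2$ and $8\tau R^{-2}\int|\nabla_Af|^2$ absorbing the magnetic and time-derivative errors exactly via the identity $\widetilde y=\widetilde x/R+\varphi\,\xi$ and the hypotheses on $\widetilde x^tB$, $\xi^tB$, $\partial_tA$, $\varphi$. The paper's argument differs only in normalization of the antisymmetric operator and in writing the magnetic error with the signature matrix $\diag(\mathbb{I}_{n_+},-\mathbb{I}_{n_-})$, which is the same mechanism you describe.
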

\begin{remark}\label{rmk:time.A}
In the proof of Theorem \ref{thm:main}, after using the Appell Transformation, we are reduced to an equation with a time-dependent magnetic potential. This motivates the necessity to prove the Carleman estimate \eqref{eq:Carleman} in the general case that $A=A(x,t)$ and $B=B(x,t)$ depend on the time variable.
\end{remark}
\begin{remark}
Remark that, in the assumptions of \Cref{lem:carleman}, only the decay condition $\widetilde{x}^t B \in L^{\infty}(\R^n\times [0,1])$ is required in \eqref{eq:decay.B.carleman}, differently from \eqref{eq:decay.B} in the main \Cref{thm:main}. The additional condition $x^t B \in L^{\infty}(\R^n)$ in the assumption \eqref{eq:decay.B} in \Cref{thm:main} is required in the proof to control the norm $\norm{\partial_t A}_{L^\infty(\R^n\times [0,1])}$ after the use of the Appell transformation in the Cr\"{o}nstrom gauge, see \Cref{rmk:time.A} and \eqref{eq:partialtA} in the proof.
\end{remark}

The rest of the paper is devoted to the proofs of Theorem \ref{thm:main}. and Proposition  \ref{lem:carleman}.

\subsection*{Acknowledgments}
Juan Antonio Barcel\'o acknowledges the support of Ministerio de Ciencia, Innovaci\'on y Universidades of the Spanish goverment through grant PID2021-124195NB-C31.
Biagio Cassano is member of GNAMPA (INDAM) that supports him through the project D86-ALMI22SCROB{\textunderscore}01 ``Dispersion and stability in incompressible fluid dynamics''; also, he has been partially supported by project Vain-Hopes within the program VALERE: VAnviteLli pEr
la RicErca, by the Basque Government through the BERC 2022-2025 program and by the Spanish State Research Agency through BCAM Severo Ochoa excellence accreditation SEV-2017-0718.
L. Fanelli was supported by project PID2021-123034NB-I00 / AEI / 10.13039/501100011033
funded by the Agencia Estatal de Investigaci\'on (Spain), the project IT1615-22 funded
by the Basque Government, and by Ikerbasque.

\section{Preliminaries}
In this preliminary section we prepare some fundamental
tools for the proof of our results.

For the following arguments, it is necessary to generalise  \eqref{eq:Bdef} and to consider a general
time-dependent magnetic potential $A=(A^1,\dots,A^n)\colon \Rn \times
[0,1] \to \Rn$. Consequently, the magnetic field $B : \Rn \times
[0,1]\to \R^{n\times n}$
is given for a.e. $x \in \R^n$ by
\begin{equation}\label{eq:Bdef.time}
  B(x,t)=D_x A(x,t)-D_x A^t(x,t),
  \qquad
  B_{jk}(x,t)=\partial_{x_j} A^k(x,t)- \partial_{x_k} A^j(x,t), \quad j,k=1,\dots,n.
\end{equation} 

\subsection{Cr\"onstrom gauge}\label{3sec:cronstrom}
Equation \eqref{eq:main} is invariant under gauge transformations: for any 
$u$ solution to \eqref{eq:main} and $\varphi=\varphi(x):\R^n\to\R$, 
the function $\widetilde u=e^{-i\varphi}u$ is solution to
\begin{equation*}
  \partial_t\widetilde u= i\left(\Delta_{\widetilde A,+} - \Delta_{\widetilde A,-}+V(x,t)\right) \widetilde u,
\end{equation*}
with
$\widetilde
A=A - \nabla\varphi$. Notice that $\widetilde B := D\widetilde{A} - D\widetilde{A}^t = B$, i.e.~the magnetic field is invariant under gauge transformations.
As in 
\cite{BCF}, we  exploit such gauge invariance and choose here to work in the {\it Cr\"onstrom gauge}  (also called
{\it transversal} or {\it Poincar\'e} gauge), which is given by the following condition
\begin{equation}\label{3eq:cronstrom3}
  x \cdot  \widetilde{A}(x)=0 \quad \text{ for a.e. }x \in  \R^n.
\end{equation} 
If $A$ satisfies \eqref{eq:condition.A}, it is always possible to reduce to the case in which \eqref{3eq:cronstrom3} holds through a gauge transformation, as the following classical result by \cite{I} shows (see also \cite[Lemma 2.2]{BFGRV} for a proof).
\begin{lemma}%[\cite{I}]
  \label{3lem:cronstrom1}
  Let $n\geq2$, $A=A(x)=(A^1,\dots,A^n):\R^n\to\R^n$, 
 $B := DA-DA^t$, and $\Psi(x):=x^tB(x)\in\R^n$ for almost all $x
 \in \R^n$.
Assume that 
\begin{equation}\label{eq:numero}
  \int_0^1\Psi(sx)\,ds\in\R^n,
  \qquad
  \int_0^1A(sx)\,ds\in\R^n,
  \qquad
  \text{for a.e. }x\in\R^n
  \end{equation}
  and denote by
  \begin{gather}
    \label{3eq:cronstrom2}
      \widetilde A (x) := A(x) -\nabla \varphi (x) = - \int_0^1 \Psi(sx)\,ds 
    \\ \label{3eq:varphi}
    \varphi(x):=x\cdot\int_0^1A(sx)\,ds\in\R.
  \end{gather}
  Then $B = D\widetilde A - D \widetilde A^t$, \eqref{3eq:cronstrom3}  holds true and 
  \begin{equation}
 \label{eq:xDA}
 x^t D\widetilde A(x)  = -\Psi(x) +\int_0^1\Psi(sx)\,ds.
 \end{equation}
\end{lemma}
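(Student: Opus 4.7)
The statement contains four assertions: (i) the gauge invariance $B = D\widetilde A - D\widetilde A^t$; (ii) the closed-form expression $\widetilde A(x) = -\int_0^1 \Psi(sx)\,ds$; (iii) the Cr\"onstrom condition $x\cdot \widetilde A(x)=0$; and (iv) the formula for $x^t D\widetilde A(x)$. I would handle them in this order. Assertion (i) is immediate, since $D\widetilde A = DA - D^2\varphi$ and the Hessian $D^2\varphi$ is symmetric, so the antisymmetric parts of $DA$ and $D\widetilde A$ coincide. The integrability assumptions in \eqref{eq:numero} are exactly what guarantee that $\varphi$ and the relevant integrals along rays are well defined a.e.

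The core of the argument is (ii), which I would establish by a direct componentwise computation. The plan is to differentiate $\varphi(x)=x\cdot \int_0^1 A(sx)\,ds$ in $x_k$ and then invoke the fundamental theorem of calculus applied to $s\mapsto sA^k(sx)$, giving the identity
\[
A^k(x) - \int_0^1 A^k(sx)\,ds = \int_0^1 s\,\sum_{j} x_j (\partial_{x_j} A^k)(sx)\,ds.
\]
Substituting this into $\widetilde A^k=A^k-\partial_{x_k}\varphi$, the two ``diagonal'' integrals combine to produce an integrand $s\,x_j\bigl[(\partial_{x_j}A^k)-(\partial_{x_k}A^j)\bigr](sx)=s\,x_j B_{jk}(sx)$, i.e.\ the $k$-th component of $\Psi(sx)$ (up to the sign convention implicit in $\Psi=x^tB$ vs.\ $Bx$). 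Once (ii) is in hand, (iii) follows instantaneously: the integrand in $x\cdot \widetilde A(x)$ carries the scalar $\sum_{j,k} x_j x_k B_{jk}(sx)$, which vanishes by antisymmetry of $B$.

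For (iv) the plan is to apply the same fundamental-theorem trick to the representation of $\widetilde A$ obtained in (ii). Computing $(x^t D\widetilde A(x))_k=\sum_{j} x_j \partial_{x_j}\widetilde A^k(x)$ by differentiation under the integral yields $-\sum_j x_j\int_0^1 s(\partial_{x_j}\Psi_k)(sx)\,ds$; introducing $G(s):=s\,\Psi_k(sx)$, one recognises $s\sum_j x_j (\partial_{x_j}\Psi_k)(sx)=G'(s)-\Psi_k(sx)$, and integrating on $[0,1]$ with $G(0)=0$, $G(1)=\Psi_k(x)$ delivers precisely $-\Psi(x)+\int_0^1 \Psi(sx)\,ds$. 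I expect the main obstacle to be not conceptual but bureaucratic: one must carefully track the signs and the $j\leftrightarrow k$ asymmetry of $B_{jk}$ throughout, and justify differentiation under the integral sign along a.e.\ ray $s\mapsto sx$; under the integrability hypotheses \eqref{eq:numero} this can be done either directly for a.e.\ $x$ or, more comfortably, by a standard mollification of $A$ followed by a limiting argument.
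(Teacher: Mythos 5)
Your argument is correct, and it is essentially the proof the paper relies on: the paper does not prove this lemma itself but defers to Iwatsuka \cite{I} and to \cite[Lemma 2.2]{BFGRV}, whose proof is precisely the componentwise computation you outline (differentiate $\varphi$, use $\frac{d}{ds}\bigl[sA^k(sx)\bigr]=A^k(sx)+s\sum_j x_j(\partial_{x_j}A^k)(sx)$, recombine into $B_{jk}$, and repeat the same trick for \eqref{eq:xDA}). On the sign you flag: with the conventions as literally stated, $B_{jk}=\partial_{x_j}A^k-\partial_{x_k}A^j$ and $\Psi=x^tB$, your computation yields $\widetilde A(x)=+\int_0^1\Psi(sx)\,ds$ and correspondingly $x^tD\widetilde A(x)=\Psi(x)-\int_0^1\Psi(sx)\,ds$, so the minus signs in \eqref{3eq:cronstrom2} and \eqref{eq:xDA} amount to reading $\Psi$ as $Bx=-x^tB$; this is harmless for the rest of the paper, where only $\abs{\Psi}$ enters the estimates, and does not affect the validity of your derivation of (i), (iii) and (iv).
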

\subsection{Appell Transformation}\label{3sec:Appell}
In this section we generate a family of solutions to \eqref{eq:main}
 by means of the Appell pseudoconformal transformation: we follow the strategy of \cite{BCF}, adapting the transformation to the hyperbolic Laplacian in consideration.
The proof of the following statement is obtained with simple modifications of the arguments in \cite[Lemma 2.7]{BFGRV}, 
so it will not be given here.
\begin{lemma}\label{lem:appell}
  Let $A = (A^1(y,s),\dots ,A^n(y,s)):\Rn \times [0,1] \to \R^{n}$,
\mbox{$V=V(y,s)$}, $F=F(y,s):\R^n \times [0,1] \to\C$, $u=u(y,s):\R^{n}\times[0,1]\to\C$ be a solution to
  \begin{equation}\label{3eq:1appell}
  \partial_s u=i\left((\Delta_{A,+} - \Delta_{A,-})u+V(y,s)u+F(y,s)\right) \quad \text{ in  }
  \Rn \times [0,1]
\end{equation}
and define, for any $\alphaAppell,\betaAppell>0$, the function
\begin{equation}\label{3eq:appell}
  \widetilde u(x,t):=
  \left(\frac{\sqrt{\alphaAppell\betaAppell}}{\alphaAppell(1-t)+\betaAppell t}\right)^{\frac
    n2} \,
  u\left(\frac{x\sqrt{\alphaAppell\betaAppell}}{\alphaAppell(1-t)+\betaAppell
      t},\frac{t\betaAppell}{\alphaAppell(1-t)+\betaAppell t}\right)
  e^{\frac{(\alphaAppell-\betaAppell)}{4i (\alphaAppell(1-t)+\betaAppell t)}(|x_+|^2 - |x_-|^2)}.
\end{equation}
Then $\widetilde u$ is a solution to
\begin{equation}\label{3eq:2appell2}
  \partial_t\widetilde u=i\left((\Delta_{\widetilde A,+} - \Delta_{\widetilde A,-})\widetilde u
    +\frac{(\alphaAppell-\betaAppell)  }{(\alphaAppell(1-t)+\betaAppell t)} 
    (\widetilde A \cdot x)
 \widetilde u+
    \widetilde V(x,t)\widetilde u+\widetilde F(x,t)\right)
  \quad \text{ in } \Rn \times [0,1],
\end{equation}
where
\begin{align}
  \label{3eq:Aappell}
  \widetilde A(x,t)
  &
  = \frac{\sqrt{\alphaAppell\betaAppell}}{\alphaAppell(1-t)+\betaAppell t}
  \, A\left(\frac{x\sqrt{\alphaAppell\betaAppell}}{\alphaAppell(1-t)+\betaAppell t},\frac{t\betaAppell}{\alphaAppell(1-t)+\betaAppell t}\right),
  \\
  \label{3eq:Vappell}
  \widetilde V(x,t)
  &
  = \frac{\alphaAppell\betaAppell}{(\alphaAppell(1-t)+\betaAppell t)^2}
  \, V\left(\frac{x\sqrt{\alphaAppell\betaAppell}}{\alphaAppell(1-t)+\betaAppell t},\frac{t\betaAppell}{\alphaAppell(1-t)+\betaAppell t}\right),
  \\
  \label{3eq:Fappell}
  \widetilde F(x,t)
  &
  = \left(\frac{\sqrt{\alphaAppell\betaAppell}}{\alphaAppell(1-t)+\betaAppell t}\right)^{\frac n2+2}
  \, F\left(\frac{x\sqrt{\alphaAppell\betaAppell}}{\alphaAppell(1-t)+\betaAppell t},\frac{t\betaAppell}{\alphaAppell(1-t)+\betaAppell t}\right)
  e^{\frac{(\alphaAppell-\betaAppell)}{4i
  (\alphaAppell(1-t)+\betaAppell t)}(|x_+|^2 - |x_-|^2)}.
\end{align}
\end{lemma}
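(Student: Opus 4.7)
The proof is a direct verification by chain rule and change of variables, paralleling the pseudoconformal Appell transformation in the elliptic case treated in \cite{BFGRV}, with the key modification that the quadratic phase $|x_+|^2 - |x_-|^2$ must match the hyperbolic signature of the operator $\Delta_{A,+} - \Delta_{A,-}$ rather than using the elliptic $|x|^2$.

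I would begin by fixing notation: set $\phi(t) := a(1-t) + bt$, $\lambda(t) := \sqrt{ab}/\phi(t)$, $y := \lambda(t) x$, $s := tb/\phi(t)$, and rewrite the prefactor as $e^{i\mu(x,t)}$ with $\mu(x,t) = \tfrac{b-a}{4\phi(t)}(|x_+|^2 - |x_-|^2)$, so that $\widetilde u(x,t) = \lambda(t)^{n/2}\, u(y,s)\, e^{i\mu}$. I would record the elementary identities $\phi' = b-a$, $\lambda'/\lambda = -\phi'/\phi$, and crucially $s'(t) = \lambda(t)^2$, as well as $\nabla_x \mu = \tfrac{b-a}{2\phi}\,\widetilde x$ — this last identity is precisely where the hyperbolic signature of the phase enters, since $\partial_{x_j}(|x_+|^2 - |x_-|^2) = 2\varepsilon_j x_j$ with $\varepsilon_j := +1$ for $j\leq k$ and $-1$ otherwise, i.e.~equal to $2\widetilde x_j$.

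Next I would compute $\partial_t\widetilde u$ and $(\Delta_{\widetilde A,+} - \Delta_{\widetilde A,-})\widetilde u$. For the spatial part, the rescaling $\widetilde A(x,t) = \lambda(t) A(y,s)$ built into \eqref{3eq:Aappell} ensures the clean intertwining
\begin{equation*}
(\partial_{x_j} - i\widetilde A^j)\widetilde u \;=\; \lambda(t)^{n/2+1}\bigl[(\partial_{y_j} - iA^j)u\bigr](y,s)\, e^{i\mu} \;+\; i\,(\partial_{x_j}\mu)\,\widetilde u,
\end{equation*}
and iterating with the signs $\varepsilon_j$ gives $(\Delta_{\widetilde A,+} - \Delta_{\widetilde A,-})\widetilde u$ as a sum of three pieces: the rescaled hyperbolic Laplacian $\lambda^{n/2+2}[(\Delta_{A,+} - \Delta_{A,-})u](y,s)\, e^{i\mu}$, a cross term proportional to $\lambda^{n/2+1}\sum_j \varepsilon_j \,(\partial_{x_j}\mu)\,[(\partial_{y_j} - iA^j)u]\, e^{i\mu}$, and a pure phase contribution $\bigl(i\sum_j\varepsilon_j\partial^2_{x_j}\mu - \sum_j\varepsilon_j(\partial_{x_j}\mu)^2\bigr)\widetilde u$. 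For $\partial_t \widetilde u$ I use $\partial_s u = i[(\Delta_{A,+} - \Delta_{A,-})u + Vu + F]$ from \eqref{3eq:1appell} together with $s'(t) = \lambda(t)^2$, producing the rescaled Laplacian term $i\lambda^{n/2+2}[(\Delta_{A,+}-\Delta_{A,-})u]e^{i\mu}$ plus the contributions from $\partial_t \lambda^{n/2}$, from $\nabla_y u \cdot \partial_t y = \tfrac{a-b}{\phi}\, y\cdot\nabla_y u$, and from $i(\partial_t\mu)\widetilde u$.

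Matching the two sides then reduces to scalar algebraic identities. The $i\lambda^{n/2+2}(\Delta_{A,+} - \Delta_{A,-})u\, e^{i\mu}$ terms cancel outright; the $Vu$ contribution produces $i\widetilde V\widetilde u$ with the prefactor $\lambda^2 = ab/\phi^2$ as in \eqref{3eq:Vappell}; and $F$ contributes $i\widetilde F$ with prefactor $\lambda^{n/2+2}$ as in \eqref{3eq:Fappell}. The main obstacle is the bookkeeping of the remaining terms: $i(\partial_t\mu)\widetilde u$, together with $\partial_t(\lambda^{n/2})u$ and the drift $\tfrac{a-b}{\phi}y\cdot \nabla_y u$ on one side, and the phase quadratic $-\sum_j\varepsilon_j(\partial_{x_j}\mu)^2\widetilde u$, the divergence-type phase term $i\sum_j \varepsilon_j \partial^2_{x_j}\mu\cdot \widetilde u$ and the cross term on the other side, must conspire to produce precisely the extra magnetic coupling $i\tfrac{a-b}{\phi}(\widetilde A\cdot x)\widetilde u$ on the right-hand side of \eqref{3eq:2appell2}. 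The heart of this cancellation is the eikonal-type identity $\partial_t\mu + \sum_j \varepsilon_j(\partial_{x_j}\mu)^2 = 0$, the hyperbolic analogue of the pseudoconformal identity, whose validity is precisely what forces the phase quadratic to be $|x_+|^2-|x_-|^2$ with the signature of the operator. Since every remaining term is a polynomial in $\widetilde x$ times a known function of $(y,s,t)$, the identification is a finite check that follows directly from the explicit formulas for $\lambda, \mu, y, s$ and the definitions \eqref{3eq:Aappell}--\eqref{3eq:Fappell}.
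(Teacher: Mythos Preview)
Your proposal is correct and matches the paper's approach: the paper does not give a proof at all, stating only that it ``is obtained with simple modifications of the arguments in \cite[Lemma 2.7]{BFGRV}, so it will not be given here.'' Your direct chain-rule verification, with the hyperbolic phase $|x_+|^2-|x_-|^2$ replacing the elliptic $|x|^2$ and the eikonal identity $\partial_t\mu + \sum_j \varepsilon_j(\partial_{x_j}\mu)^2 = 0$ governing the cancellation, is exactly the ``simple modification'' the authors have in mind.
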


\subsection{Carleman estimate}
The main tool in the proof of \Cref{thm:main} is a Carleman estimate for the purely magnetic hyperbolic Schr\"odinger group $i\partial_t+\Delta_{A,+} - \Delta_{A,-}$.
In the following we adapt \cite[Lemma 2.3]{BCF} to consider the hyperbolic Laplacian $\Delta_{A,+} - \Delta_{A,-}$.
The following proposition is needed in the proof of \Cref{thm:main}.
\begin{proof}[Proof of Proposition \ref{lem:carleman}]
Let $g \in C_c^{\infty}(\Rn\times [0,1])$ as in \eqref{eq:supp}.
Denoting $f = e^{\tau\abs{\frac{x}{R}+\varphi(t) \widetilde{\xi}}^2}g$,
an explicit computation shows that
\begin{equation*}
  e^{\tau\abs{\frac{x}{R}+\varphi(t) \widetilde{\xi}}^2}
  (i \partial_t + \Delta_{A,+} - \Delta_{A,-}) g
  =
  \mathcal{S}_\tau f - 4 \tau \mathcal{A}_\tau f, 
\end{equation*}
where $\mathcal{S}_\tau$ and $\mathcal{A}_\tau$ are respectively the symmetric and
anti-symmetric operators
\begin{equation*}
  \begin{split}
    & \mathcal{S}_\tau = i \partial_t + \Delta_{A,+} - \Delta_{A,-} +
    \frac{4\tau^2}{R^2} \left( \left\vert \frac{x_+}{R} + \varphi
      \xi_+\right\vert^2 - \left\vert \frac{x_-}{R} - \varphi \xi_-\right\vert^2\right),
    \\
    & \mathcal{A}_\tau = \frac{1}{R} \left[ \left( \frac{x_+}{R} + \varphi \xi_+ \right)
    \cdot \nabla_{A,+} - \left( \frac{x_-}{R} - \varphi \xi_- \right) \cdot \nabla_{A,-} \right]+ \frac{n_+ - n_-}{2R^2} + \frac{i\varphi'}{2}
    \left(\frac{x}{R} + \varphi \widetilde{\xi} \right)\cdot \widetilde{\xi}.
  \end{split}
\end{equation*}
%\begin{equation*}
%  \begin{split}
%    & \mathcal{S}_\tau = i \partial_t + \Delta_{A,1} - \Delta_{A,2} +
%    \frac{4\tau^2}{R^2} \left( \left\vert \frac{x_1}{R} + \varphi
%      \widetilde{\xi}\right\vert^2 - \left\vert \frac{x_2}{R}\right\vert^2\right),
%    \\
%    & \mathcal{A}_\tau = \frac{1}{R} \left[ \left( \frac{x_1}{R} + \varphi \widetilde{\xi} \right)
%    \cdot \nabla_{A,1} - \frac{x_2}{R} \cdot \nabla_{A,2} \right]+ \frac{n_1 - n_2}{2R^2} + \frac{i\varphi'}{2}
%    \left(\frac{x_1^1}{R} + \varphi \right).
%  \end{split}
%\end{equation*}

We hence have
\begin{equation*}
  \begin{split}
  &\norm{e^{\tau\left\vert \frac{x}{R} + \varphi \widetilde{\xi} \right\vert^2}
      (i\partial_t +\Delta_{A,+} - \Delta_{A,-}) g}_{L^2(\R^{n} \times [0,1])}^2
   \\
   & = \norm{\mathcal{S}_\tau f}_{L^2(\R^{n} \times [0,1])}^2
   + 
   16\tau^2\norm{\mathcal{A}_\tau f}_{L^2(\R^{n} \times [0,1])}^2
    -4\tau \Re
    \langle [\mathcal{S}_\tau,\mathcal{A}_\tau] f ,f \rangle_{L^2(\R^{n} \times [0,1])}
    \\
    & \geq 
    -4\tau \Re
    \langle [\mathcal{S}_\tau,\mathcal{A}_\tau] f ,f \rangle_{L^2(\R^{n} \times [0,1])}.
  \end{split}
\end{equation*}
The explicit computation of $[\mathcal{S}, \mathcal{A}]$ follows easily adapting the computations in \cite[Lemma 4.1]{BFGRV} (see also \cite[Section 2]{FV}), the only relevant change being the computation of 
\begin{equation}
\begin{split}
&\Re \left\langle \left[\Delta_{A,+} - \Delta_{A,-} \, , \, \frac{1}{R}\left(\left(\frac{x_+}{R} + \varphi \xi_+\right) \cdot \nabla_{A,+} - \left(\frac{x_-}{R} - \varphi \xi_-\right) \cdot \nabla_{A,-} \right)\right] f, f \right\rangle_{L^2(\R^{n} \times [0,1])}  
\\ 
& = 
-\frac{2}{R^2} \int_0^1 \int_{\R^n} \left[ \abs{\nabla_{A,+} f}^2 + \abs{\nabla_{A,-} f}^2\right] \, dx dt
\\
& \quad - 
\frac{2}{R} \Im \int_0^1 \int_{\R^n} 
\left(\frac{x}{R} + \varphi \widetilde{\xi}\right)^t 
\begin{pmatrix}
\mathbb{I}_{n_+} & 0 \\
0 & -\mathbb{I}_{n_-}
\end{pmatrix}
B(x)
\begin{pmatrix}
\mathbb{I}_{n_+} & 0 \\
0 & -\mathbb{I}_{n_-}
\end{pmatrix}
\nabla_A f 
\, \overline{f} \, dxdt,
\end{split}
\end{equation}
where $\mathbb{I}_{n_+}$ and $\mathbb{I}_{n_-}$ are the identity matrices of order $n_+$ and $n_-$ respectively.
We conclude that
\begin{equation}\label{eq:use.me} 
  \begin{split}
  &\big\Vert{e^{\tau\left\vert  \frac{x}{R} + \varphi \widetilde{\xi} \right\vert^2}
    (i\partial_t +\Delta_{A,+} - \Delta_{A,-}) g\big\Vert}_{L^2(\R^{n} \times [0,1])}^2
  \\
  & \geq 
    \frac{32 \tau^3}{R^4} \int_0^1 \int_{\R^{n}} \left[ \left\vert \frac{x_+}{R} + \varphi \xi_+
    \right\vert^2 + \left\vert \frac{x_-}{R} - \varphi \xi_-    \right\vert^2 \right]
    \abs{f}^2 \,dx dt
+    \frac{8\tau}{R^2} \int_0^1 \int_{\R^{n}}  \left[ \abs{\nabla_{A,+} f}^2  + \abs{\nabla_{A,-} f}^2 \right]  \, dxdt
  \\
    & \quad + 
    2\tau\int_0^1 \int_{\R^{n}}  \left[ \left( \frac{x}{R} + \varphi \widetilde{\xi} 
      \right)\cdot \widetilde{\xi} \, \varphi''
      + (\varphi')^2 \right] \abs{f}^2 \, dxdt
      \\
      & \quad 
    +\frac{8\tau}{R} \Im \int_0^1 \int_{\R^{n}}  \varphi' (\xi_+ \cdot \nabla_{A,+} + \xi_- \cdot \nabla_{A,-}) f
    \overline{f} \, dxdt
    \\
&   \quad  -\frac{4\tau}{R}\Re \int_0^1 \int_{\R^{n}} 
   \left[ \left(\frac{x_+}{R}+\varphi \xi_+ \right)\cdot (\partial_t A_{+}) - \left( \frac{x_-}{R} - \varphi \xi_-\right) \cdot (\partial_t A_-) \right]  \abs{f}^2 \,dxdt
    \\
    & \quad +
\frac{8\tau}{R} \Im \int_0^1 \int_{\R^n} 
\left(\frac{x}{R} + \varphi \widetilde{\xi}\right)^t 
\begin{pmatrix}
\mathbb{I}_{n_+} & 0 \\
0 & -\mathbb{I}_{n_-}
\end{pmatrix}
B
\begin{pmatrix}
\mathbb{I}_{n_+} & 0 \\
0 & -\mathbb{I}_{n_-}
\end{pmatrix}
\nabla_A f 
\, \overline{f} \, dxdt,
   \\
   & = 
   \normalfont\textsc{I} +
   \normalfont\textsc{II} +
   \normalfont\textsc{III} +
   \normalfont\textsc{IV} +
   \normalfont\textsc{V}.
  \end{split}
\end{equation}
Clearly
\begin{equation}\label{eq:I}
\normalfont\textsc{I} = 
    \frac{32 \tau^3}{R^4} \int_0^1 \int_{\R^{n}}  \left\vert \frac{x}{R} + \varphi \widetilde{\xi}
    \right\vert^2 
    \abs{f}^2 \,dx dt
+    \frac{8\tau}{R^2} \int_0^1 \int_{\R^{n}}   \abs{\nabla_{A} f}^2  \, dxdt.
\end{equation}
Thanks to \eqref{eq:supp},
\begin{equation}\label{eq:II}
\normalfont\textsc{II} \geq 
  -  2\tau  \sup_{t \in [0,1]} [\abs{\varphi''} + \abs{\varphi'}^2]  \int_0^1 \int_{\R^{n}}  \left\vert \frac{x}{R} + \varphi \widetilde{\xi} \right\vert^2
\abs{f}^2 \, dxdt
\end{equation}
and
\begin{equation}\label{eq:III}
\begin{split}
\normalfont\textsc{III} & \geq - \frac{8\tau}{R} 
 \int_0^1 \int_{\R^{n}} 
 \abs{\varphi'} \abs{\xi \cdot \nabla_A f} \abs{f}\, dxdt
 \geq  - \frac{8\tau}{R} 
 \int_0^1 \int_{\R^{n}} 
 \abs{\varphi'} \abs{\nabla_A f} \abs{f}\, dxdt
 \\
 & \geq - {4\tau} \sup_{t\in [0,1]} \abs{\varphi'}^2 
 \int_0^1 \int_{\R^{n}} 
\left\vert \frac{x}{R} + \varphi \widetilde{\xi} \right\vert^2 \abs{f}^2 \,dxdt
- \frac{4\tau}{R^2} 
 \int_0^1 \int_{\R^{n}} 
\abs{\nabla_A f}^2 \,dxdt.
\end{split}
\end{equation}
Thanks to \eqref{eq:supp} and since $R>1$, we have that 
\begin{equation}\label{eq:IV}
\normalfont\textsc{IV} \geq - \frac{8 \tau}{R}  \int_0^1 \int_{\R^{n}} 
\left\vert \frac{x}{R} + \varphi \widetilde{\xi} \right\vert \abs{\partial_t A} \abs{f}^2 \, dxdt
\geq - {8 \tau}%{R} 
\norm{\partial_t A}_{L^{\infty}(\R^n \times [0,1])} 
 \int_0^1 \int_{\R^{n}}  
\left\vert \frac{x}{R} + \varphi \widetilde{\xi} \right\vert^2  \abs{f}^2 \, dxdt.
\end{equation}
To estimate $\normalfont\textsc{V}$, we observe that $\widetilde{\xi} \begin{pmatrix}
\mathbb{I}_{n_+} & 0 \\
0 & -\mathbb{I}_{n_-}
\end{pmatrix} = \xi$. 
%thanks to \eqref{eq:degenerateB} 
%\begin{equation*}
%\norm*{ \widetilde{\xi}^t \begin{pmatrix}
%\mathbb{I}_{n_+} & 0 \\
%0 & -\mathbb{I}_{n_-}
%\end{pmatrix}
%B }_{L^\infty(\R^n \times [0,1])} 
%= 
%\norm{\xi^t B}_{L^\infty(\R^n \times [0,1])}  < +\infty.
%\end{equation*}
Consequently,  % \biagio{ATTENZIONE! qua stai buttando un $R^{-2} \sim 1/\gamma$}
\begin{equation*}
\begin{split}
\normalfont\textsc{V} & \geq 
- \frac{8 \tau}{R^2}  \int_0^1 \int_{\R^{n}} 
\abs{ \widetilde{x}^t B} \abs{\nabla_A f} \abs{f} \,dx dt
- \frac{8 \tau}{R}  \int_0^1 \int_{\R^{n}} 
\abs{\varphi}\abs{ \xi ^t B} \abs{\nabla_A f} \abs{f} \,dx dt
\\
& \geq - \frac{8 \tau}{R^2} \left( 
\norm{ \widetilde{x}^t B }_{L^{\infty}(\R^n \times [0,1])} 
+ \norm{\varphi}_{L^\infty([0,1])} \norm{\xi^t B}_{L^\infty(\R^n \times [0,1])}
\right)
 \int_0^1 \int_{\R^{n}}  \abs{\nabla_A f} \abs{f} \,dx dt
\\
& \geq - \frac{4 \tau } {R^2}
\left(\norm{ \widetilde{x}^t B }_{L^{\infty}(\R^n \times [0,1])}
+ \norm{\varphi}_{L^\infty([0,1])} \norm{\xi^t B}_{L^\infty(\R^n \times [0,1])}\right)^2 
 \int_0^1 \int_{\R^{n}}  \abs{f}^2 \,dxdt 
 \\
 & \quad 
 - \frac{4 \tau}{R^2} 
 \int_0^1 \int_{\R^{n}} \abs{\nabla_A f}^2 \,dxdt.
 \end{split}
 \end{equation*}
Since $R>1$ and thanks to \eqref{eq:supp}, we conclude that 
 \begin{equation}\label{eq:V}
 \begin{split}
 \normalfont\textsc{V} \geq & - {8 \tau }
\left(\norm{ \widetilde{x}^t B }_{L^{\infty}(\R^n \times [0,1])}^2
+ \norm{\varphi}_{L^\infty([0,1])}^2 \norm{\xi^t B}_{L^\infty(\R^n \times [0,1])}^2\right)
 \int_0^1 \int_{\R^{n}} \left\vert \frac{x}{R} + \varphi \widetilde{\xi}\right\vert^2 \abs{f}^2 \,dxdt
 \\ &
 - \frac{4 \tau}{R^2} 
 \int_0^1 \int_{\R^{n}} \abs{\nabla_A f}^2 \,dxdt.
 \end{split}
\end{equation}

From \eqref{eq:use.me}, collecting \eqref{eq:I}--\eqref{eq:V}, we have
\begin{equation*}
\big\Vert{e^{\tau\left\vert \frac{x}{R} + \varphi \widetilde{\xi} \right\vert^2}
  (i\partial_t +\Delta_{A,+} - \Delta_{A,-}) g\big\Vert}_{L^2(\R^{n} \times [0,1])}^2 
\geq \left(   \frac{32 \tau^3}{R^4} - 2
\tau \Psi \right)
  \int_0^1 \int_{\R^{n}}  \left\vert \frac{x}{R} + \varphi \widetilde{\xi} \right\vert^2
\abs{f}^2 \,dx dt,
\end{equation*}
where
\begin{equation*}
\begin{split}
\Psi  := &
\norm{\varphi''}_{L^\infty([0,1])} +
  3 \norm{\varphi'}_{L^\infty([0,1])}^2 
  +4 \norm{\partial_t A}_{L^{\infty}(\R^n \times [0,1])} 
  \\
  & +4  \norm{\widetilde{x}^t B}_{L^{\infty}(\R^n \times [0,1])} ^2
   +4 \norm{\varphi}_{L^\infty([0,1])}^2 \norm{\xi^t B}_{L^\infty(\R^n \times [0,1])}^2.
   \end{split}
\end{equation*}
The thesis is now immediate, since the coefficient at right hand side is bigger than
$\tau^3/c^2 R^4$, if $\tau \geq c R^2$ for $c$ as in \eqref{eq:big.c}.
\end{proof}

\section{Proof of \Cref{thm:main} }
In the following we show \Cref{thm:main}. 
In the proof we follow the strategy of \cite{BCF}, adapting it to consider the hyperbolic Laplacian $\Delta_{A,+} - \Delta_{A,-}$.

  \medskip \noindent
\underline{\bf Reduction to the Cr\"onstrom gauge}. 
We reduce our problem to the Cr\"onstrom gauge, thanks to Lemma \ref{3lem:cronstrom1}. Indeed, in the proof of \Cref{thm:main} assumptions \eqref{eq:condition.A}
 and \eqref{eq:decay.B} 
 give immediately \eqref{eq:numero}.
%, since $\norm{ x^t B }_{L^{\infty}(\R^n)} \leq M_B < +\infty$;
%in the proof of \Cref{prop:boundedB}, \eqref{eq:numero} follows from \eqref{eq:xBbounded}.
Denoting by
  \begin{equation*}
    \varphi(x):= x \cdot\int_0^1 A(sx)\,ds,
    \quad
    \widetilde A(x):=A(x) - \nabla \varphi(x)
\end{equation*}
we have that $B =DA-DA^t = D\widetilde A - D \widetilde
  A^t$, and for a.e. $x \in \R^n$ 
  \begin{gather}
    \label{eq:cronstrom.1}
    \widetilde A(x)= -\int_0^1 (sx)^t B(sx) \,ds,
\\
    x \cdot   \widetilde A(x) = 0.
  \end{gather}
%Moreover, from \eqref{eq:B.null} and \eqref{eq:cronstrom.1} we see that
%  \begin{equation}\label{eq:ventisette}
%    \xi \cdot \widetilde A(x)  =0, \quad \text{ for a.e. }x \in \R^n.
%  \end{equation}
Let $\widetilde u := e^{-i\varphi } u$. Then
(cfr.~\Cref{3sec:cronstrom}) $\widetilde u$ is solution to 
\begin{equation}\label{eq:nuovanuova}
    \partial_t\widetilde u= i\left(\Delta_{\widetilde A,+} - \Delta_{\widetilde A,-}+
      V(x,t) \right)\widetilde u
    \quad \text{ in }\Rn \times [0,1],
\end{equation}
and the conditions in \eqref{eq:initial.mass} and \eqref{eq:norm.bound}
are true replacing $u,A$ with $\widetilde u, \widetilde A$.

\medskip
\medskip \noindent
\underline{{\bf Appell Transformation}}. 
To lighten the notations, in the following we will omit the tildes and just denote
$\widetilde u$, $\widetilde A$
by $u$ and $A$ in \eqref{eq:nuovanuova}. We now apply Lemma \ref{lem:appell} to the equation \eqref{eq:nuovanuova}. 
We choose
  \begin{equation*}
    \alphaAppell, \betaAppell >0, \quad \gamma := \frac{\alphaAppell}{\betaAppell},
  \end{equation*}
  in such a way that
    \begin{equation}
    \label{eq:gamma.big}
    \gamma > \gamma^\ast := \text{max}\left(1, 
    \frac{2}{R_0},
      \frac{ 64  E_u^2 (1+M_V)}{M_u^2}, \frac{4}{R_1 - 4R_0},
      \sqrt{\frac{M_V M_u}{2^{12} E_u}},
      \frac{\sqrt{\abs{2k - n}}}{2^{3/4} R_0},
            \frac{2^{8} E_u}{R_0 M_u}
    \right)^2,
  \end{equation}
Let
\begin{equation}\label{eq:defn.v}
v(x,t) := \alpha(t)^{\frac{n}2}e^{-\frac{i}{4}\beta(t)[\vert x_+ \vert^2 - \vert x_- \vert^2]}u(\alpha(t)x,s(t)), \quad (x,t)\in\mathbb{R}^n\times [0,1],
\end{equation}
with
\begin{equation}\label{eq:change.of.variables}
    \alpha(t)= \frac{1}{(1-t)\sqrt{\gamma}+t/\sqrt{\gamma}}, \quad
    \beta(t)=\frac{1}{1-t+t/\gamma}-\frac{1}{\gamma(1-t)+t},
    \quad
    s(t)=\frac{t}{\gamma(1-t)+t}.
  \end{equation}
    Thanks to \Cref{lem:appell}, $v$ is solution to 
    \begin{equation}\label{3eq:2appell}
      \partial_t v 
      = i\left(\Delta_{\widetilde A,+} - \Delta_{\widetilde A,-} 
        +  \widetilde V(x,t)  \right) v
      \quad \text{ in }\Rn \times [0,1]
    \end{equation}
for $\widetilde A$ and $\widetilde V$  defined by % as in
% \eqref{3eq:Aappell} and \eqref{3eq:Vappell} respectively:
\begin{equation}
  \label{eq:A.V.appell}
    \widetilde A(x,t)
  := \alpha(t) \, A(\alpha(t)x),
  \quad 
  \widetilde V(x,t)
  := (\alpha(t))^2 \,  V(\alpha(t) x , s(t)).
\end{equation}
From \eqref{eq:V.bound}, \eqref{eq:A.V.appell} and since $\sup_{t\in[0,1]} |\alpha(t)| =\sqrt{\gamma}$, we have that
\begin{equation}
\norm{\widetilde V}_{L^{\infty}(\R^n\times[0,1])} = \gamma M_V < +\infty.
\end{equation}
With an explicit computation, \eqref{eq:A.V.appell} gives
\begin{equation*}
\partial_t \widetilde A(x,t) =  (\alpha(t))^2\frac{\gamma -1}{\sqrt{\gamma}} \left[ A(x,t) +
  (\alpha(t) x)^t (D A)(\alpha(t)x)\right].
\end{equation*}
%In the proof of \Cref{thm:main},
Thanks to \eqref{eq:decay.B}, \eqref{eq:xDA}, \eqref{eq:cronstrom.1} and since $\sup_{t\in[0,1]} |\alpha(t)| =\sqrt{\gamma}$, we conclude from above that
\begin{equation}\label{eq:partialtA}
\norm{\partial_t \widetilde A}_{L^\infty(\R^n \times [0,1])} \leq 
3 \gamma^{\frac32} M_B < +\infty.
\end{equation}
%In the proof of \Cref{prop:boundedB}, analogously, thanks to \eqref{eq:xBbounded}, \eqref{eq:xDA} and \eqref{eq:cronstrom.1} 
%we get
%\begin{equation}\label{eq:partialtAbounded}
%\norm{\partial_t \widetilde A}_{L^\infty(\R^n \times [0,1])} \leq 
%3 \gamma^{\frac32} N_B < +\infty.
%\end{equation}
We define the magnetic field $\widetilde B$ according to \eqref{eq:Bdef.time}, getting that  
\begin{equation}\label{eq:tildeB}
\widetilde B(x,t) :=D_x \widetilde A(x,t)-D_x \widetilde A^t(x,t) = (\alpha(t))^2 B(\alpha(t) x) \quad   \text{ for a.e. }x \in \R^n.
\end{equation}
%In the proof of \Cref{prop:boundedB}, thanks to \eqref{eq:Bbounded} and since $\sup_{t\in[0,1]} |\alpha(t)| =\sqrt{\gamma}$,
%we get 
%\begin{equation}\label{eq:2d.bounded.proof}
%\norm{\widetilde B}_{L^\infty(\R^n \times [0,1])} \leq \gamma N_{B,\infty} < +\infty.
%\end{equation}
From \eqref{eq:decay.B} and \eqref{eq:tildeB}, 
we have 
\begin{equation}\label{eq:decay.B.proof}
\norm{(x_+,-x_-)^t \widetilde B(x,t)}_{L^\infty(\R^n \times [0,1])} = 
\sup_{t\in [0,1]}  \sup_{x\in \R^n}  \alpha(t) \left\vert (\alpha(t) x_+,- \alpha(t)x_-)^t B(\alpha(t) x) \right\vert \leq  \sqrt{\gamma} M_B < + \infty.
\end{equation}
From \eqref{eq:B.null}, \eqref{eq:tildeB} and since $\sup_{t\in[0,1]} |\alpha(t)| =\sqrt{\gamma}$, 
we have 
 \begin{equation}\label{eq:ventisettedue}
 \norm{ \xi^t \widetilde B }_{L^\infty(\R^n \times [0,1])} 
 \leq \gamma  \norm{ \xi^t B }_{L^\infty(\R^n \times [0,1])} 
 = \gamma M_\xi < +\infty.
  \end{equation}
We finally remark
that, since $A$ is in the Cr\"onstrom gauge, then $\widetilde A$
is in the Cr\"onstrom gauge too.  
  
  \medskip \noindent
  \underline{{\bf Carleman estimate}}.
Let us denote
\begin{equation}\label{eq:erre}
R := R_0 \sqrt{\gamma},
\end{equation}
so that from \eqref{eq:gamma.big} we
  have $R > 2$. We define the following auxiliary functions:
  \begin{equation*} 
    \theta_R, \eta\in\mathcal{C}^{\infty}([0,+\infty)), \quad
    \theta_R(s)= 
    \begin{cases}
      1 & \text{ if } s\leq R \\
      0 & \text{ if } s\geq R+1,
    \end{cases}
  \quad
 \eta(s)= 
 \begin{cases}
   1 & \text{ if }s\geq 2 \\
   0 & \text{ if }s\leq 3/2,
 \end{cases} 
\end{equation*}
and, with abuse of notation, we denote $\theta_R(x):= \theta_R(\abs{x}), \eta(x):= \eta(\abs{x})$ for all $x \in \R^n$.
Let us assume moreover that for all $s \geq 0$ 
\begin{align}
  \label{eq:control.derivatives.theta}
  & \abs{\theta_R(s)}\leq 1, \quad \abs{\theta_R'(s)} \leq 1,  \quad \abs{ \theta_R''(s)}
    \leq 2, \\
    \label{eq:control.derivatives.eta}
  & \abs{\eta(s)}\leq 1, \quad \abs{\eta'(s)} \leq 2,  \quad \abs{\eta''(s)} \leq
    4.
\end{align}
Also, let
\begin{equation*}
 \varphi\in\mathcal{C}^{\infty}([0,1]), \quad \varphi(t)= 
 \begin{cases}
   4 & \text{ if }t\in [3/8,5/8] \\
   0 & \text{ if }t\in [0,1/4]\cup [3/4,1],
 \end{cases}
\end{equation*}
 such that for all $t \in [0,1]$
\begin{equation}
  \label{eq:control.derivatives.varphi}
  \abs{\varphi(t)}\leq 4, \quad  \abs{\varphi'(t)}\leq 32.
\end{equation}
For $ \xi = (\xi_+, \xi_-)$ in \eqref{eq:B.null}, let $\widetilde{\xi}=(\xi_+, -\xi_-)$, according to the notation in \eqref{eq:v}. 
%In the proof of \Cref{prop:boundedB}, we let $\widetilde{\xi} \in \mathbb{S}^{n-1}$ be an arbitrary vector. 
Set
\begin{equation}\label{eq:defn.g}
g(x,t) := \theta_R(x) \, \eta\left(\frac{x}{R}+\varphi(t)\widetilde{\xi}\right)\, v(x,t),
\quad (x,t)\in\mathbb{R}^n\times [0,1].
\end{equation}
We observe that $\supp g$ is compact and
\begin{equation}\label{eq:supp.g}
  \supp g \subset \left\{(x,t) \in \R^{d}\times[0,1] \, \middle| \,
    \abs{x}\leq R+1, \,
    \frac32 \leq \left|\frac{x}{R}+\varphi(t) \widetilde{\xi} \right|, \,
    t \in \left[\frac14,\frac34\right]
    \right\},
\end{equation}
indeed for $t \in [0,\tfrac14] \cup [\tfrac34,1]$, $g(x,t)$ is non vanishing
if $\frac{3}{2} \leq \frac{\abs{x}}{R}
\leq \frac{R+1}{R}$, that is in contraddiction with $R > 2$ given by \eqref{eq:gamma.big}.

From \eqref{eq:partialtA}, \eqref{eq:decay.B.proof} and \eqref{eq:ventisettedue}, we are in the assumptions of \Cref{lem:carleman}: 
\begin{equation}\label{eq:choose.c}
 c := 
  \left(
  \norm{\varphi''}_{L^\infty([0,1])} +
   \norm{\varphi'}_{L^\infty([0,1])}^2
  + 3\gamma^{\frac32} M_B  +  \gamma M_B^2  + 16 \gamma^2 M_\xi^2 + 1 \right)^{\frac12},
 \end{equation} for all
$\tau \geq c R^2$ it is true that
\begin{equation}\label{eq:carleman.proof}
  \frac{\tau^{3/2}}{cR^2}
  \norm*{e^{\tau\left| \frac{x}{R}+\varphi(t) \widetilde\xi \right|^2}
    g(x,t)}_{L^2(\R^n \times [0,1])} \leq
  \norm*{ e^{\tau\left| \frac{x}{R}+\varphi(t) \widetilde\xi\right|^2}
    (i\partial_t + \Delta_{\widetilde{A},+}-\Delta_{\widetilde{A},-})g(x,t)}_{L^2(\R^n \times [0,1])}.
\end{equation}
%In the proof of \Cref{prop:boundedB}, thanks to \eqref{eq:partialtAbounded}, \eqref{eq:2d.bounded.proof} and \Cref{lem:carleman.bounded}, \eqref{eq:carleman.proof} is true for all $\tau \geq c R^2$, being
%\begin{equation}\label{eq:choose.c}
% c := 2
%  \left(
%  \norm{\varphi''}_{L^\infty([0,1])} +
%   \norm{\varphi'}_{L^\infty([0,1])}^2
%   + 3 \gamma^{\frac32} N_B +
%  + 4\gamma^2 N_{B,\infty}^2 + 1 \right)^{\frac12}.
% \end{equation}
 In the following we estimate from above and from below the
quantities in \eqref{eq:carleman.proof}.
%From now on, the proofs of \Cref{thm:main} and \Cref{prop:boundedB} follow the same path.

\medskip \noindent
\underline{{\bf Estimate from below}}.
We estimate from below the left hand side of
\eqref{eq:carleman.proof}.
Since 
$\left| \frac{x}{R}
  + \varphi(t)\widetilde{\xi}\right| \geq 2$ and $g = \theta_R v$ on $\{\vert
x\vert\leq R+1\}\times [3/8,5/8]$,
we have
\begin{equation}\label{eq:from.below}
  \begin{split}
\norm*{e^{\tau\left| \frac{x}{R}+\varphi(t)\widetilde{\xi}\right|^2}
  g(x,t)}_{L^2(\R^n \times [0,1])}^2
  & = \int_0^1\int_{\R^n} e^{2\tau\left| \frac{x}{R}+\varphi(t)\widetilde{\xi}\right|^2}\vert g(x,t)\vert^2dxdt  \\
  & \geq e^{8\tau}\int_{3/8}^{5/8}\int_{\vert x\vert\leq R+1} \vert \theta_R(x)v(x,t)\vert^2dxdt \\
  & = e^{8\tau}\int_{3/8}^{5/8}\int_{\vert x\vert\leq R+1}
  \alpha(t)^n\vert \theta_R(x)u(\alpha(t)x,s(t))\vert^2dxdt.
\end{split}
\end{equation}
%  & 
We perform the following change of variables in the
integral at right hand side of \eqref{eq:from.below}:
\begin{equation*}
y  = \alpha(t) x, \quad s(t)=\frac{t}{\gamma(1-t)+t}.
\end{equation*}
% Observing that
% \begin{equation*}
% s\left(\left[\frac38,\frac58\right]\right)\subset
% \left[\frac{3}{5\gamma+3}, \frac{5}{3\gamma+5}\right], =:I,
% \end{equation*}
% with a simple computation one sees that
% \begin{equation*}
%   \frac{\gamma}{8}\leq \frac{dt}{ds}(s) \leq \gamma, 
% \end{equation*}
From \eqref{eq:change.of.variables}, we observe that % De hecho es $\alpha(t) \leq \frac{8}{3\sqrt{\gamma}}$
\begin{align}\label{eq:for.change.of.variables.1}
  & \frac{1}{\sqrt{\gamma}} \leq \alpha(t) \leq \frac{3}{\sqrt{\gamma}},
  \quad \text{ for all } t \in \left[\frac38,\frac58\right],
  \\
  \label{eq:for.change.of.variables.2}
  & \frac{\gamma}{8}\leq \frac{dt}{ds}(s) =\frac{\gamma}{(1+s\gamma-s)^2} \leq \gamma,
  \quad \text{ for all }s\in 
  \left[\frac{3}{5\gamma+3}, \frac{5}{3\gamma+5}\right]
  = s\left(\left[\frac38,\frac58\right]\right).
\end{align}
From \eqref{eq:from.below} and \eqref{eq:for.change.of.variables.2}  we conclude that 
\begin{equation*}
  \norm*{e^{\tau\left| \frac{x}{R}+\varphi(t)\widetilde{\xi}\right|^2}
  g(x,t)}_{L^2(\R^n \times [0,1])}^2
\geq e^{8\tau}\frac{\gamma}{8}\int_{\frac{3}{5\gamma+3}}^{\frac{5}{3\gamma+5}}
\int_{\vert y \vert \leq \alpha(t(s))(R+1)}
\left| \theta_R\left(\frac{y}{\alpha(t(s))}\right)\, u(y,s) \right|^2dyds.
\end{equation*}
Clearly then
\begin{equation}\label{eq:carleman.from.below}
  \norm*{e^{\tau\left| \frac{x}{R}+\varphi(t)\widetilde{\xi}\right|^2}
  g(x,t)}_{L^2(\R^n \times [0,1])}^2
\geq e^{8\tau}\frac{\gamma}{8}\int_{\frac{3}{5\gamma+3}}^{\frac{5}{3\gamma+5}}
\int_{\vert y \vert \leq \alpha(t(s))(R+1)}
\left| \theta_R\left(\frac{y}{\alpha(t(s))}\right)\, u(y,0)
\right|^2dyds
+ e^{8\tau}\frac{\gamma}{8} E,
\end{equation}
with
\begin{equation*}
  E = \int_{\frac{3}{5\gamma+3}}^{\frac{5}{3\gamma+5}}
\int_{\vert y \vert \leq \alpha(t(s))(R+1)}
\theta_R^2\left(\frac{y}{\alpha(t(s))}\right)\, \left(\abs{u(y,s)}^2 - \abs{u(y,0)}^2\right) dyds.
\end{equation*}
We estimate from below the first term at right hand side in
\eqref{eq:carleman.from.below}: from the condition $\gamma > 1$ in \eqref{eq:gamma.big}, we have 
$\abs{  [\frac{3}{5\gamma+3}, \frac{5}{3\gamma+5}]
} > 1/(4\gamma)$ and 
thanks to \eqref{eq:initial.mass} and
\eqref{eq:for.change.of.variables.1} we conclude 
\begin{equation}\label{eq:to.conclude.1}
  \begin{split}
& e^{8\tau}\frac{\gamma}{8}\int_{\frac{3}{5\gamma+3}}^{\frac{5}{3\gamma+5}}
\int_{\vert y \vert \leq \alpha(t(s))(R+1)}
\left| \theta_R\left(\frac{y}{\alpha(t(s))}\right)\, u(y,0)
\right|^2dyds
\\ &  \geq
  e^{8\tau}\frac{\gamma}{8}\int_{\frac{3}{5\gamma+3}}^{\frac{5}{3\gamma+5}}
\int_{\vert y \vert \leq \alpha(t(s)) R}
\abs{u(y,0)}^2dyds
 \geq   \frac{e^{8\tau}}{32} \int_{\vert y \vert \leq  R /\sqrt{\gamma}}
\abs{u(y,0)}^2dy
\\ & = \frac{e^{8\tau}}{32}M_u^2.
\end{split}
\end{equation}
To estimate $E$, we observe that from \eqref{eq:main} we get that 
\begin{equation*}
\begin{split}
 \frac{d}{dt}|u|^2  & = 
 -2\Im \left[ \sum_{j=1}^{n_+} \partial_j (u\overline{(\partial_j -i A_j) u}) - 
\sum_{j=n_+ + 1}^{n} \partial_j (u\overline{(\partial_j -i A_j) u}) + V|u|^2   \right]
\\
&  =: -2\Im \left[ \text{div}_+(u\overline{\nabla_{A,+} u}) - \text{div}_-(u\overline{\nabla_{A,-} u}) + V|u|^2   \right] 
\end{split}
\end{equation*}
that gives
\begin{equation*}
\begin{split}
& |u(y,s)|^2-|u(y,0)|^2 
\\
& = -2 \Im \int_0^s \left(\text{div}_+\left(u(y,s')
  \overline{\nabla_{A,+} u(y,s')}\right) - \text{div}_-\left(u(y,s')
  \overline{\nabla_{A,-} u(y,s')}\right) + V(y,s') |u(y,s')|^2\right)ds'.
  \end{split}
\end{equation*}
So 
\begin{equation}\label{eq:E}
\abs{E} \leq E_1 + E_2,
\end{equation}
being
\begin{equation}
\begin{split}
E_1 = 2  \left\vert \int_{\frac{3}{5\gamma+3}}^{\frac{5}{3\gamma+5}} \int_0^s
    \int_{\vert y \vert \leq \alpha(t(s))(R+1)}
    \theta_R^2\left(\frac{y}{\alpha(t(s))}\right)\, 
    \left[ \text{div}_+ \left(u(y,s')
  \overline{\nabla_{A,+} u(y,s')}\right) \right. \right.
  \\ 
   \left. \left. - \text{div}_- \left(u(y,s')
  \overline{\nabla_{A,-} u(y,s')}\right) \right] dyds'ds
   \right\vert
   \end{split}
\end{equation}
and
\begin{equation}
  E_2 = 2 \left\vert \int_{\frac{3}{5\gamma+3}}^{\frac{5}{3\gamma+5}}
     \int_0^s \int_{\vert y \vert \leq \alpha(t(s))(R+1)}
    \theta_R^2\left(\frac{y}{\alpha(t(s))}\right)\,  V(y,s') |u(y,s')|^2 dyds'ds \right\vert.
\end{equation}
To estimate $E_1$ we integrate by parts: there is no boundary
contribution thanks to the choice of $\theta_R$.
Thanks to 
%\eqref{eq:gamma.big} \biagio{porqu\'e lo uso?}, 
\eqref{eq:control.derivatives.theta}
and 
\eqref{eq:for.change.of.variables.1} we have
% {\color{blue} quiero decir que
%   $(\theta_R\nabla\theta_R)\left(\frac{y}{\alpha(t(s))}\right)$ es la
%   funci\'on $(\theta_R\nabla\theta_R)$ computada en la variable
%   $\left(\frac{y}{\alpha(t(s))}\right)$}
\begin{equation}\label{eq:E1}
  \begin{split}
  E_1 & \leq
  4 \left\vert \int_{\frac{3}{5\gamma+3}}^{\frac{5}{3\gamma+5}}
   \frac1{\alpha(t(s))}
    \int_0^s
    \int_{\vert y \vert \leq \alpha(t(s))(R+1)}
    u(y,s') \,
    (\theta_R \nabla_+\theta_R, -\theta_R\nabla_-\theta_R) \left(\frac{y}{\alpha(t(s))}\right)\cdot
    \overline{\nabla_A u(y,s')}  dyds'ds
  \right\vert
  \\
  & \leq 4\sqrt{\gamma}
  \int_{\frac{3}{5\gamma+3}}^{\frac{5}{3\gamma+5}}
  \int_0^s
  \int_{\vert y \vert \leq 4(R+1)/\sqrt{\gamma}}
  \left\vert u(y,s') 
    \overline{\nabla_A u(y,s')} \right\vert dyds'ds
  \\
  & \leq
  4 \sqrt{\gamma} \left(\frac{5}{3\gamma+5} - \frac{3}{5\gamma
      +3}\right)
  \int_0^{\frac{5}{3\gamma+5}}
  \int_{\vert y \vert  \leq 4(R+1)/\sqrt{\gamma}}
  \left\vert u(y,s') 
    \overline{\nabla_A u(y,s')} \right\vert dyds'
  \\
  & \leq
  \frac{8}{\sqrt{\gamma}} \frac{5}{3\gamma+5}
    \sup_{s' \in [0,1]}
    \int_{\vert y \vert  \leq 4(R+1)/\sqrt{\gamma}}
  \left\vert u(y,s') 
    \overline{\nabla_A u(y,s')} \right\vert dy
  \\ 
  & \leq \frac{16}{\gamma^{3/2}}
  \sup_{s' \in [0,1]}
  \int_{\vert y \vert  \leq 4(R+1)/\sqrt{\gamma}}
  \left\vert u(y,s') 
    \overline{\nabla_A u(y,s')} \right\vert dy
    \\
  & \leq \frac{8}{\gamma^{3/2}}
  \sup_{s' \in [0,1]}
  \int_{\vert y \vert  \leq R_1}
  \left( \abs{u(y,s')}^2 +
    \abs*{\nabla_A u(y,s')}^2 \right) dy = \frac{8}{\gamma^{3/2}} E_u^2.
  \end{split}
\end{equation}
To estimate $E_2$, we reason as above and thanks to \eqref{eq:V.bound},
\eqref{eq:norm.bound}
%\biagio{porqu\'e lo uso?} \eqref{eq:gamma.big} 
and \eqref{eq:for.change.of.variables.1}, we get
\begin{equation}\label{eq:E2}
  \begin{split}
  E_2  &  \leq \frac{4 M_V}{\gamma} \int_0^{\frac{5}{3\gamma+5}}
  \int_{\vert y \vert \leq 4(R+1)/\sqrt{\gamma}}
  |u(y,s')|^2 dyds' 
  \\ & \leq \frac{8 M_V}{\gamma^2} \sup_{s' \in [0,1]}
  \int_{\vert y \vert \leq 4(R+1)/\sqrt{\gamma}}
    |u(y,s')|^2 dy
    \leq  \frac{8 M_V E_u^2}{\gamma^2}.
  \end{split}
\end{equation}
Thanks to \eqref{eq:gamma.big}, from \eqref{eq:carleman.from.below}--\eqref{eq:E2}
we conclude that
\begin{equation}\label{eq:from.below.final}
  \norm*{e^{\tau\left| \frac{x}{R}+\varphi(t)\widetilde{\xi}\right|^2}
  g(x,t)}_{L^2(\R^n \times [0,1])}^2
\geq   \frac{e^{8\tau}}{2^6} M_u^2.
\end{equation}

\medskip \noindent
\underline{{\bf Estimate from above}}.
We estimate from above the right hand side of
\eqref{eq:carleman.proof}: from \eqref{3eq:2appell} and \eqref{eq:defn.g}
we have
\begin{equation*}
  (i\partial_t+\Delta_{\widetilde A,+} - \Delta_{\widetilde A, -} )g (x,t) = F_1(x,t)+F_2(x,t)+F_3(x,t) + F_4(x,t),
\end{equation*}
where we have set
\begin{align}
 \label{eq:F1}
F_1(x,t) :=& -\widetilde{V}(x,t) g(x,t), \\
 \notag
F_2(x,t) :=& \,  \theta_R(x) \Big[i\varphi'(t) \widetilde{\xi} \cdot
      \nabla \eta\left(\tfrac{x}{R}+
      \varphi (t) \widetilde{\xi} \right) v(x,t)
      +
      \tfrac{1}{R^{2}} \left(\Delta_{0,+}  \eta \left(\tfrac{x}{R} +
      \varphi (t) \widetilde{\xi} \right) - \Delta_{0,-}  \eta \left(\tfrac{x}{R} +
      \varphi (t) \widetilde{\xi} \right)\right) v(x,t)
      \\
      & \label{eq:F2}
     + \tfrac{2}{R} \left(\nabla_{0,+} \eta\left(\tfrac{x}{R} +
      \varphi (t) \widetilde{\xi} \right), -\nabla_{0,-} \eta\left(\tfrac{x}{R} +
      \varphi (t) \widetilde{\xi} \right) \right)\cdot
      \nabla_{\widetilde A}
      v(x,t) 
      \Big],
%      + \tfrac{2}{R} \nabla_{0,+} \eta\left(\tfrac{x}{R} +
%      \varphi (t) \widetilde{\xi} \right) \cdot
%      \nabla_{\widetilde A,+}
%      v(x,t) - \tfrac{2}{R} \nabla_{0,-} \eta\left(\tfrac{x}{R} +
%      \varphi (t) \widetilde{\xi} \right) \cdot
%      \nabla_{\widetilde A, -}
%      v(x,t) \Big],
      \\ \notag
F_3(x,t):= & \, \eta\left(\tfrac{x}{R}+
      \varphi (t) \widetilde{\xi} \right)
      \Big[(\Delta_{0,+} \theta_R(x) - \Delta_{0,-}\theta_R(x)) v(x,t) 
      \\
      & + 2\nabla_{0,+} \theta_R(x) \cdot
      \nabla_{\widetilde A, +}
      v(x,t) - 2  \nabla_{0,-}\theta_R(x) \cdot
      \nabla_{\widetilde A,-}
      v(x,t)\Big],       
      \\
F_4(x,t) := & \, \left[\tfrac{2}{R} \nabla_{0,+} \theta_R(x) \cdot \nabla_{0,+} \eta \left(\tfrac{x}{R} +
      \varphi (t) \widetilde{\xi} \right)
      - \tfrac{2}{R} \nabla_{0,-} \theta_R(x) \cdot \nabla_{0,-} \eta \left(\tfrac{x}{R} +
      \varphi (t) \widetilde{\xi} \right)\right] v(x,t). 
\end{align}

Consequently,
\begin{equation}\label{eq:from.above}
    \norm*{ e^{\tau\left| \frac{x}{R}+\varphi(t)\widetilde{\xi}\right|^2}
      (i\partial_t + \Delta_{\widetilde{A},+}- \Delta_{\widetilde{A},-})g(x,t)}_{L^2(\R^n \times [0,1])}
    \leq
    \sum_{i=1}^4
        \norm*{ e^{\tau\left| \frac{x}{R}+\varphi(t)\widetilde{\xi}\right|^2}
          F_i(x,t)}_{L^2(\R^n \times [0,1])}.
\end{equation}
We estimate separately the terms at right hand side of the previous inequality.
From \eqref{eq:change.of.variables}, we get that
\begin{equation}\label{eq:for.change.of.variables.3}  
  \frac{1}{\sqrt{\gamma}} \leq \alpha(t) \leq \frac{4}{\sqrt{\gamma}},
  \quad
  0\leq \beta(t) % =\frac{1}{1-t+\gamma^{-1}t}-\frac{1}{\gamma(1-t)+t}
    \leq
  \frac{1}{1-t+\gamma^{-1}t} \leq 4,
  \quad \text{ for all } t \in \left[\frac14,\frac34\right],
\end{equation}
Thanks to \eqref{eq:V.bound}, \eqref{eq:gamma.big}, \eqref{eq:A.V.appell}, \eqref{eq:supp.g}
and \eqref{eq:for.change.of.variables.3}, we have
$\norm{\widetilde{V}}_{L^\infty(\supp g)} \leq
16 M_V/\gamma$ and  
\begin{equation}\label{eq:from.above.1}
  \begin{split}
        \norm*{ e^{\tau\left| \frac{x}{R}+\varphi(t)\widetilde{\xi}\right|^2}
          F_1(x,t)}_{L^2(\R^n \times [0,1])} 
        &\leq \frac{16 M_V}{\gamma}
          \norm*{e^{\tau\left| \frac{x}{R}+\varphi(t)\widetilde{\xi}\right|^2}
            g(x,t)}_{L^2(\R^n \times [0,1])}
        \\ & \leq
          \frac{2^{16} E_u}{M_u}
          \norm*{e^{\tau\left| \frac{x}{R}+\varphi(t)\widetilde{\xi}\right|^2}
            g(x,t)}_{L^2(\R^n \times [0,1])}.
        \end{split}
      \end{equation}
Observe that in the support of $F_2$ we have $\abs*{\frac{x}{R} +
  \varphi(t)\widetilde{\xi}} \leq 2$, and thanks to \eqref{eq:gamma.big}, % $R>2$
\eqref{eq:control.derivatives.theta}--%
% \eqref{eq:control.derivatives.eta},
\eqref{eq:control.derivatives.varphi} and \eqref{eq:supp.g}
we estimate
\begin{equation} \label{eq:F_2}
  \norm*{ e^{\tau\left| \frac{x}{R}+\varphi(t)\widetilde{\xi}\right|^2}
    F_2(x,t)}_{L^2(\R^n \times [0,1])}^2
  \leq 2^{14} e^{8\tau}
  \int_{1/4}^{3/4}
  \int_{\vert x \vert \leq R+1} \left(\abs{v(x,t)}^2
  + \frac{1}{R^{2}} \abs{\nabla_{\widetilde A} v}^2\right)dxdt
  = F_{21} + F_{22}.
\end{equation}
We use again the change of variables in
\eqref{eq:change.of.variables}: we observe that
\begin{equation}
  \label{eq:for.change.of.variables.4}
  \frac{\gamma}{16}\leq \frac{dt}{ds}(s) % =\frac{\gamma}{(1+s\gamma-s)^2} 
  \leq \gamma,
  \quad \text{ for all }s\in 
  \left[\frac{1}{3\gamma+1}, \frac{3}{\gamma+3}\right]
  = s\left(\left[\frac14,\frac34\right]\right).
\end{equation}
Thanks to \eqref{eq:for.change.of.variables.4}, we have that
\begin{equation}\label{eq:join.1}
  F_{21} 
  %  2^{14}e^{8\tau}
  % \int_{1/4}^{3/4}
  % \int_{\vert x \vert \leq R+1} \abs{v(x,t)}^2  dxdt
  \leq 2^{14}e^{8\tau} \gamma
  \int_{\frac{1}{3\gamma+1}}^{\frac{3}{\gamma+3}}
  \int_{\abs{y} \leq \alpha(t(s)) (R+1)}
  \abs{u(y,s)}^2 dy ds.
\end{equation}
Thanks to \eqref{eq:defn.v}, \eqref{eq:for.change.of.variables.3} and \eqref{eq:for.change.of.variables.4},
\begin{equation*}
  \begin{split}
    F_{22} % & = 2^{14}e^{8\tau}\int_{1/4}^{3/4}\int_{\vert x\vert\leq R+1}
    % \frac{1}{R^{2}} \abs{\nabla_{\widetilde A} v(x,t)}^2dxdt \\
    & = {2^{14} e^{8\tau}}\int_{1/4}^{3/4}\int_{\vert x\vert\leq R+1}
    \frac{\alpha(t)^n}{R^{2}} \abs*{\alpha(t)(\nabla_A u)(\alpha(t)x,
      s(t))-\frac{i}{2}\beta(t)\widetilde{x} \, u (\alpha(t)x, s(t))}^2 dxdt \\
    & \leq  2^{14} e^{8\tau}  \gamma
    \int_{\frac{1}{3\gamma+1}}^{\frac{3}{\gamma+3}}
    \int_{|y|\leq \alpha(t(s))(R+1)}
    \frac{1}{R^2}\abs*{\alpha(t(s))\nabla_A u(y,s) -
      \frac{i \beta(t(s)) (y_+, - y_-)}{2\alpha(t(s))} u(y,s)}^2 dyds \\
    & \leq 2^{14} e^{8\tau} \gamma
    \int_{\frac{1}{3\gamma+1}}^{\frac{3}{\gamma+3}}
    \int_{|y|\leq \alpha(t(s))(R+1)}
    \left( \frac{32}{\gamma R^2}|\nabla_A u(y,s)|^2 +
      8\left(1+\frac{1}{R}\right)^2|u(y,s)|^2 \right) dyds.
  \end{split}
\end{equation*}
Thanks to \eqref{eq:gamma.big}, from the last inequality we conclude that
\begin{equation}\label{eq:join.2}
  F_{22} \leq 2^{19} e^{8\tau} \gamma
  \int_{\frac{1}{3\gamma+1}}^{\frac{3}{\gamma+3}}
  \int_{|y|\leq \alpha(t(s))(R+1)}
  \left( |u(y,s)|^2 + |\nabla_A u(y,s)|^2 \right) dyds.
\end{equation}
From \eqref{eq:norm.bound}, \eqref{eq:gamma.big}, % \eqref{eq:for.change.of.variables.3},
\eqref{eq:join.1},
\eqref{eq:join.2} and
since $\abs{[\frac{1}{3\gamma+1},\frac{3}{\gamma+3}]} \leq
\frac{4}{\gamma}$, we get
\begin{equation}\label{eq:from.above.2}
   \norm*{ e^{\tau\left| \frac{x}{R}+\varphi(t)\widetilde{\xi}\right|^2}
    F_2(x,t)}_{L^2(\R^n \times [0,1])}^2
 \leq 2^{22} e^{8\tau}
  \sup_{s \in [0,1]}
  \int_{|y|\leq R_1}
  \left( |u(y,s)|^2 + |\nabla_A u(y,s)|^2 \right) dy
  \leq
  2^{22} e^{8\tau} E_u^2.
\end{equation}
We treat now the term with $F_3$. We observe that in its support we
have $R \leq \abs{x}\leq R+1$ and $|\frac{x}{R}+\varphi(t)\widetilde{\xi}|\leq 6$
thanks to \eqref{eq:control.derivatives.varphi} and since $R>2$, consequence of \eqref{eq:gamma.big}. 
Thanks to \eqref{eq:control.derivatives.theta} and \eqref{eq:control.derivatives.eta} we have
\begin{equation*}
\begin{split}
 & \norm*{ e^{\tau\left| \frac{x}{R}+\varphi(t)\widetilde{\xi}\right|^2}
     F_3(x,t)}_{L^2(\R^n \times [0,1])}^2
    \\ & 
  \leq   
  2 e^{72\tau}\int_{1/4}^{3/4}
  \int_{R \leq \vert x \vert \leq R+1}
\left\vert \Delta_{0,+}\theta_R (x) - \Delta_{0,-} \theta_R (x) \right\vert^2 \abs{v(x,t)}^2
dxdt
\\ & \quad
+   8 e^{72\tau}\int_{1/4}^{3/4}
  \int_{R \leq \vert x \vert \leq R+1}
 \abs{\nabla_{0,+} \theta_R(x) \cdot
      \nabla_{\widetilde A, +}
      v(x,t) - \nabla_{0,-}\theta_R(x) \cdot
      \nabla_{\widetilde A,-}
      v(x,t)}^2 dxdt
      \\
  & =:  F_{31} + F_{32}.
  \end{split}
\end{equation*}
Since
\begin{equation}
\Delta_{0,+}\theta_R (x) - \Delta_{0,-} \theta_R (x) = 
\left( |x_+|^2 - |x_-|^2 \right) \left( \frac{\theta_R''(\abs{x})}{|x|^2} - \frac{\theta_R'(\abs{x})}{|x|^3} \right)
+ (k - (n-k)) \frac{\theta_R'(\abs{x})}{|x|},
\end{equation}
thanks to \eqref{eq:control.derivatives.theta} and since $R>2$, we have
\begin{equation}
\begin{split}
F_{31}(x,t) \leq  & \,
  2 e^{72\tau}\int_{1/4}^{3/4}
  \int_{R \leq \vert x \vert \leq R+1}
 \left\vert  \frac{ |x_+|^2 - |x_-|^2}{R^{3}} ( (R+1) \theta_R''(\abs{x})  - \theta_R'(\abs{x}) )
 +  \frac{2k-n}{R} \theta_R'(\abs{x}) \right\vert^2 \abs{v(x,t)}^2
dxdt \\
\leq
&  \, 16 R^{-2} e^{72\tau}\int_{1/4}^{3/4}
  \int_{R \leq \vert x \vert \leq R+1}
 \left(\left\vert |x_+|^2 - |x_-|^2 \right\vert^2  +  \abs{2k-n}^2 \right) \abs{v(x,t)}^2
dxdt.
\end{split}
\end{equation}
Since $R = \sqrt{\gamma} R_0$, from \eqref{eq:gamma.big} we have that $R>2$ and $|2k - n|^2 < 2^3 R^4$, so we conclude that 
\begin{equation}
\begin{split}
F_{31}(x,t) \leq  & \,
 2^4  e^{72\tau}\int_{1/4}^{3/4}
  \int_{R \leq \vert x \vert \leq R+1}
 \left(2^3 R^2  +  R^{-2}\abs{2k-n}^2 \right) \abs{v(x,t)}^2
dxdt
\\
\leq & \, 2^8 e^{72\tau} R^2 
\int_{1/4}^{3/4}
\int_{R \leq \vert x \vert \leq R+1}
\abs{v(x,t)}^2
dxdt.
\end{split}
\end{equation}

Using the change of coordinates \eqref{eq:change.of.variables}
and reasoning as in the estimate \eqref{eq:join.1}, we have
\begin{equation}\label{eq:F31}
  F_{31}  \leq 
  2^{8} e^{72\tau} \gamma R^{2} 
  \int_{\frac{1}{3\gamma+1}}^{\frac{3}{\gamma+3}}
  \int_{R \leq \frac{\abs{y}}{\alpha(t(s))} \leq  R+1}
 % \left[ (\alpha(t))^{-4} \left\vert |y_+|^2 - |y_-|^2 \right\vert^2  + (n_+ -n_-)^2 \right] 
  \abs{u(y,s)}^2 \, dyds
  =
  2^{8} e^{72\tau} \gamma^2 R_0^{2} 
  \int_{\frac{1}{3\gamma+1}}^{\frac{3}{\gamma+3}}
  \int_{R \leq \frac{\abs{y}}{\alpha(t(s))} \leq  R+1}
%  \left[ \left\vert |y_+|^2 - |y_-|^2 \right\vert^2  + (n_+ -n_-)^2 \right] 
  \abs{u(y,s)}^2 \, dyds.
\end{equation}
To treat $F_{32}$ we observe that
\begin{equation}
\nabla_{\widetilde A, \pm} v (x,t) =
(\alpha(t))^{\frac{n}{2}} e^{-i \frac{\beta(t)}{4}(\abs{x_+}^2 - \abs{x_-}^2)}
\left( \alpha(t) \nabla_{A,\pm} u(\alpha(t) x, s(t)) \mp \frac{i \beta(t)}{2} x_\pm u(\alpha(t)x, s(t)) 
\right),
\end{equation}
so that we have, thanks to \eqref{eq:for.change.of.variables.3} and since $R>2$,
\begin{equation}
\begin{split}
F_{32}(x,t)  \leq &
8 e^{72 \tau} 
\int_{1/4}^{3/4}
  \int_{R \leq \vert x \vert \leq R+1}
  |x|^{-2}
  \abs*{x_+ \cdot \nabla_{\widetilde A, +} v (x,t) - x_- \cdot \nabla_{\widetilde A, -} v(x,t) }^2 \, dxdt
  \\
   \leq  & \,
2^4 e^{72 \tau} 
\int_{1/4}^{3/4}
  \int_{R \leq \vert x \vert \leq R+1}
(\alpha(t))^n |x|^{-2} \big\vert \alpha(t) (x_+ \cdot \nabla_{A, +} u(\alpha(t) x, s(t)) -  x_- \cdot \nabla_{A, -} u(\alpha(t) x, s(t)) )\big\vert^2
\, dxdt
\\
& \, + 
2^4 e^{72 \tau}  
\int_{1/4}^{3/4}
  \int_{R \leq \vert x \vert \leq R+1}
(\alpha(t))^n |x|^{-2} \abs*{\frac{\beta(t)}{2} (\abs{x_+}^2 + \abs{x_-}^2) u(\alpha(t)x,s(t))}^2
\, dxdt
  \\
   \leq  & \,
2^8 e^{72 \tau} 
\int_{1/4}^{3/4}
  \int_{R \leq \vert x \vert \leq R+1}
{\gamma^{-1}}(\alpha(t))^n \abs*{ \tfrac{x_+}{\abs{x}} \cdot \nabla_{A, +} u(\alpha(t) x, s(t)) -  \tfrac{x_-}{\abs{x}} \cdot \nabla_{A, -} u(\alpha(t) x, s(t)) )}^2
\, dxdt
\\
& \, + 
2^8 e^{72 \tau} R^{2}  
\int_{1/4}^{3/4}
  \int_{R \leq \vert x \vert \leq R+1}
(\alpha(t))^n \abs*{u(\alpha(t)x,s(t))}^2
\, dxdt.
\end{split}
\end{equation}
Since $2<R$, $\gamma R^2 = \gamma^2 R_0^2$, thanks to \eqref{eq:defn.v}, \eqref{eq:for.change.of.variables.3},
\eqref{eq:for.change.of.variables.4} and  reasoning as in the estimate
\eqref{eq:join.2} we get
\begin{equation}\label{eq:F32}
\begin{split}
F_{32} \leq & \,
2^8  e^{72 \tau} 
  \int_{\frac{1}{3\gamma+1}}^{\frac{3}{\gamma+3}}
  \int_{R \leq \frac{\abs{y}}{\alpha(t(s))} \leq  R+1}
  \abs*{ \tfrac{y_+}{\abs{y}} \cdot \nabla_{A, +} u(y, s) -  \tfrac{y_-}{\abs{y}} \cdot \nabla_{A, -} u(y, s) )}^2
\, dyds
\\
& \, + 
2^8 e^{72 \tau}\gamma R^{2}  
  \int_{\frac{1}{3\gamma+1}}^{\frac{3}{\gamma+3}}
  \int_{R \leq \frac{\abs{y}}{\alpha(t(s))} \leq  R+1}
\abs*{u(y,s)}^2
\, dyds
\\
\leq & \,
2^6 R^2 \gamma^2 R_0^2  e^{72 \tau} 
\int_{\frac{1}{3\gamma+1}}^{\frac{3}{\gamma+3}}
\int_{R \leq \frac{\abs{y}}{\alpha(t(s))} \leq  R+1}
\frac1{\gamma R^2} \abs*{ \tfrac{y_+}{\abs{y}} \cdot \nabla_{A, +} u(y, s) -  \tfrac{y_-}{\abs{y}} \cdot \nabla_{A, -} u(y, s) )}^2
\, dyds
\\
& \, + 
2^8 e^{72 \tau}\gamma^2 R_0^{2}  
  \int_{\frac{1}{3\gamma+1}}^{\frac{3}{\gamma+3}}
  \int_{R \leq \frac{\abs{y}}{\alpha(t(s))} \leq  R+1}
\abs*{u(y,s)}^2
\, dyds
\\
\leq & \,
2^{8}  e^{72 \tau} \gamma^2 R_0^2 
  \int_{\frac{1}{3\gamma+1}}^{\frac{3}{\gamma+3}}
  \int_{R \leq \frac{\abs{y}}{\alpha(t(s))} \leq  R+1}
\left[ \abs*{u(y,s)}^2 +  \frac1{\gamma} \abs*{ \tfrac{y_+}{\abs{y}} \cdot \nabla_{A, +} u(y, s) -  \tfrac{y_-}{\abs{y}} \cdot \nabla_{A, -} u(y, s) )}^2
 \right]
\, dyds.
\end{split}
\end{equation}
From \eqref{eq:F31} and \eqref{eq:F32}, we have 
\begin{equation*}
\begin{split}
   & \norm*{ e^{\tau\left| \frac{x}{R}+\varphi(t)\widetilde{\xi}\right|^2}
    F_3(x,t)}_{L^2(\R^n \times [0,1])}^2
  \\ & \leq
  2^{9}   e^{72\tau} \gamma^2 R_0^2   \int_{\frac{1}{3\gamma+1}}^{\frac{3}{\gamma+3}}
  \int_{ R\leq \frac{|y|}{\alpha(t(s))} \leq R+1}
  \left[ |u(y,s)|^2+
  \frac1{\gamma} \abs*{ \tfrac{y_+}{\abs{y}} \cdot \nabla_{A, +} u(y, s) -  \tfrac{y_-}{\abs{y}} \cdot \nabla_{A, -} u(y, s) }^2
  \right] dyds.
  \end{split}
\end{equation*}
The length of the above space integration region is $\alpha(t(s))$. In
order to write it in terms of $\gamma$,
we see by \eqref{eq:change.of.variables}, \eqref{eq:for.change.of.variables.3}, and since
$\alpha(t(s))\sqrt{\gamma} = 1 + s(\gamma -1)$, that
\begin{equation*}
\begin{split}
 & \left\{(y,s) \, \middle| \, \alpha(t(s))R\leq |y| \leq \alpha(t(s))(R+1), s\in
    \left[{\frac{1}{3\gamma+1}},{\frac{3}{\gamma+3}}\right]  \right\}
 \\ & \subset
  \left\{ (y,s) \,  \middle| \, \big\vert \abs{y} -R_0 -R_0s\gamma\big\vert
      < \frac{4(R_0+1)}{\sqrt{\gamma}}, s\in
    \left[{\frac{1}{3\gamma+1}},{\frac{3}{\gamma+3}}\right]  \right\},
    \end{split}
  \end{equation*}
since 
     \begin{gather*}
       |y| - R_0 -R_0 s \gamma \geq
      % \alpha(t(s))R - R_0 - R_0 s \gamma
      %=
      % (1 +s\gamma -s) R_0 - R_0(1+s\gamma) = \\
      % & 
       -s R_0 % \geq
      % -\frac{3}{\gamma+3} R_0 \geq -\frac{4}{\sqrt{\gamma}} R_0
       \geq -\frac{4}{\sqrt{\gamma}} (R_0+1), 
\\
       |y| -R_0 -R_0 s \gamma 
   % \leq   \alpha(t(s))R + \alpha(t(s)) - R_0 - R_0 s \gamma
    %   \leq 
     %  \alpha(t(s)) - s R_0 
     \leq \alpha(t(s))
     %  \\ & 
       \leq \frac{1 + s(\gamma -1)}{\sqrt{\gamma}} 
     %  \leq   \frac{1}{\sqrt{\gamma}} +
      % \frac{3}{\sqrt{\gamma}}\frac{\gamma-1}{\gamma+3}
       % \leq \frac{4}{\sqrt{\gamma}} 
       \leq \frac{4}{\sqrt{\gamma}}(R_0+1).
   \end{gather*}
Therefore
\begin{equation}\label{eq:from.above.3}
  \begin{split}
     & \norm*{ e^{\tau\left| \frac{x}{R}+\varphi(t)\widetilde{\xi}\right|^2}
       F_3(x,t)}_{L^2(\R^n \times [0,1])}^2
     \\ & 
\leq  2^{9}   e^{72\tau} \gamma^2 R_0^2
    \int_{\frac{1}{3\gamma+1}}^{\frac{3}{\gamma+3}}
    \int_{||y|-R_0-R_0s\gamma|<\frac{4(R_0+1)}{\sqrt{\gamma}}}
    \left(\abs{u(y,s)}^2 + 
     {\gamma}^{-1} \abs*{ \tfrac{y_+}{\abs{y}} \cdot \nabla_{A, +} u(y, s) -  \tfrac{y_-}{\abs{y}} \cdot \nabla_{A, -} u(y, s)}^2 
    \right) dyds.
  \end{split}
\end{equation}
Finally, we treat the term in $F_4$: we reason analogously as done in
the estimates of the terms in $F_2$ and $F_3$. Thanks to
\eqref{eq:norm.bound}, \eqref{eq:gamma.big},
\eqref{eq:control.derivatives.theta}, 
\eqref{eq:control.derivatives.eta}, \eqref{eq:for.change.of.variables.3}, \eqref{eq:for.change.of.variables.4} and since $R>2$ we have
\begin{equation}\label{eq:from.above.4}
  \begin{split}
    & \norm*{ e^{\tau\left| \frac{x}{R}+\varphi(t)\widetilde{\xi}\right|^2}
      F_4(x,t)}_{L^2(\R^n \times [0,1])}^2
    \\ &
    \leq
       e^{8 \tau} 
    \int_{1/4}^{3/4} \int_{R\leq \abs{x}\leq R+1} \abs{v(x,t)}^2 \, dxdt
    \leq
     e^{8 \tau} \gamma
    \int_{\frac{1}{3\gamma+1}}^{\frac{3}{\gamma+3}}
    \int_{R\leq \frac{\abs{y}}{\alpha(t(s))} \leq R+1}
    \abs{u(y,s)}^2 \,dyds
    \\ &
    \leq
    4 e^{8\tau} \sup_{s\in [0,1]} \int_{\abs{y}\leq R_1}
    \abs{u(y,s)}^2\,dy \leq 4 e^{8\tau} E_u^2.
  \end{split}
\end{equation}

Gathering \eqref{eq:carleman.proof}, \eqref{eq:from.above}, \eqref{eq:from.above.1},
\eqref{eq:from.above.2}, \eqref{eq:from.above.3} and \eqref{eq:from.above.4},
we conclude that 
\begin{equation}\label{eq:from.above.final}
  \begin{split}
  &   \frac{\tau^{3/2}}{cR^2}
  \norm*{e^{\tau\left| \frac{x}{R}+\varphi(t)\widetilde{\xi}\right|^2}
    g(x,t)}_{L^2(\R^n \times [0,1])} \leq
  \norm*{ e^{\tau\left| \frac{x}{R}+\varphi(t)\widetilde{\xi}\right|^2}
    (i\partial_t + \Delta_{\widetilde{A},+} - \Delta_{\widetilde{A},-})g(x,t)}_{L^2(\R^n \times [0,1])}
  \\ & \leq
\frac{2^{16} E_u}{M_u}
       \norm*{e^{\tau\left| \frac{x}{R}+\varphi(t)\widetilde{\xi}\right|^2}
    g(x,t)}_{L^2(\R^n \times [0,1])}
  +
  2^{12} e^{4\tau} E_u
  \\ & \quad +
  \left(2^9   e^{72 \tau} \gamma^2 R_0^2
    \int_{\frac{1}{3\gamma+1}}^{\frac{3}{\gamma+3}}
    \int_{||y|-R_0-R_0s\gamma|<\frac{4(R_0+1)}{\sqrt{\gamma}}}
    \left(\abs{u(y,s)}^2 + 
         {\gamma}^{-1} \abs*{ \tfrac{y_+}{\abs{y}} \cdot \nabla_{A, +} u(y, s) -  \tfrac{y_-}{\abs{y}} \cdot \nabla_{A, -} u(y, s)}^2 
         \right)
    dyds \right)^{\frac{1}{2}}.
\end{split}
\end{equation}

\medskip \noindent
\underline{{\bf Conclusion of the proof}}.
We set 
\begin{equation}\label{eq:defn.tau}
\tau := 64 c R^2.
\end{equation}
Thanks to \eqref{eq:gamma.big} and \eqref{eq:choose.c}, we have 
\begin{equation}\label{eq:cuentas.finales}
   \frac{2^{16}E_u}{M_u} \leq 2^8 \sqrt{\gamma} R_0 \leq 2^8 \sqrt{c} R = \frac{\tau^{3/2}}{2cR^2},
\end{equation}
so from \eqref{eq:from.above.final} we get
\begin{equation*}
  \begin{split}
  &   \frac{\tau^{3/2}}{cR^2}
  \norm*{e^{\tau\left| \frac{x}{R}+\varphi(t)\widetilde{\xi}\right|^2}
    g(x,t)}_{L^2(\R^n \times [0,1])} \leq
    \frac{\tau^{3/2}}{2cR^2}
    \norm*{e^{\tau\left| \frac{x}{R}+\varphi(t)\widetilde{\xi}\right|^2}
    g(x,t)}_{L^2(\R^n \times [0,1])}
  +
  2^{12} e^{4\tau} E_u
  \\ & \quad +
  \left(2^9   e^{72 \tau} \gamma^2 R_0^2
    \int_{\frac{1}{3\gamma+1}}^{\frac{3}{\gamma+3}}
    \int_{||y|-R_0-R_0s\gamma|<\frac{4(R_0+1)}{\sqrt{\gamma}}}
    \left(\abs{u(y,s)}^2 + 
         {\gamma}^{-1} \abs*{ \tfrac{y_+}{\abs{y}} \cdot \nabla_{A, +} u(y, s) -  \tfrac{y_-}{\abs{y}} \cdot \nabla_{A, -} u(y, s)}^2 
         \right)
    dyds \right)^{\frac{1}{2}}.
\end{split}
\end{equation*}
Thanks to \eqref{eq:from.below.final} and \eqref{eq:defn.tau}, this immediately gives
\begin{equation}
  \begin{split}
  \label{eq:1}
  &2^5 e^{4\tau}\sqrt{c}RM_u = \frac{\tau^{3/2}}{2 c R^2} \frac{e^{4\tau}M_u}{2^3}
  \\ & \leq
  2^{12} e^{4\tau} E_u
  +
  \left(2^{10}   e^{72 \tau} \gamma^2 R_0^2
    \int_{\frac{1}{3\gamma+1}}^{\frac{3}{\gamma+3}}
    \int_{||y|-R_0-R_0s\gamma|<\frac{4(R_0+1)}{\sqrt{\gamma}}}
    \left(\abs{u(y,s)}^2 + \gamma^{-1} \abs{\nabla_A u (y,s)}^2\right)
    dyds\right)^{\frac{1}{2}}.
\end{split}
\end{equation}
Thanks to \eqref{eq:choose.c} and \eqref{eq:cuentas.finales}, we have
$  2^{12} e^{4\tau} E_u \leq  2^4 e^{4\tau}RM_u \leq 2^4 e^{4\tau}\sqrt{c}RM_u$ ,
so we conclude that 
\begin{equation*}
  2^8 e^{8\tau} c R^2 M_u^2  \leq
  2^{10}  e^{72 \tau} \gamma^2 R_0^2
    \int_{\frac{1}{3\gamma+1}}^{\frac{3}{\gamma+3}}
    \int_{||y|-R_0-R_0s\gamma|<\frac{4(R_0+1)}{\sqrt{\gamma}}}
    \left(\abs{u(y,s)}^2 + \gamma^{-1} \abs{\nabla_A u (y,s)}^2\right)
    dyds,
\end{equation*}
that is to say
\begin{equation*}
  M_u^2  \leq
  \frac{4   e^{2^{12} c R_0^2 \gamma}}{c}  \gamma
    \int_{\frac{1}{3\gamma+1}}^{\frac{3}{\gamma+3}}
    \int_{||y|-R_0-R_0s\gamma|<\frac{4(R_0+1)}{\sqrt{\gamma}}}
    \left(\abs{u(y,s)}^2 + \gamma^{-1} \abs{\nabla_A u (y,s)}^2\right)
    dyds.
\end{equation*}
Consequently, for $C = \max(2^{12}c, 4 c^{-1}) >0$ we have
\begin{equation}\label{eq:almost.the.end}
  M_u^2  \leq
  C e^{ C R_0^2 \gamma}  \gamma
    \int_{\frac{1}{4\gamma}}^{\frac{3}{\gamma}}
    \int_{||y|-R_0-R_0s\gamma|<\frac{4(R_0+1)}{\sqrt{\gamma}}}
    \left(\abs{u(y,s)}^2 + \gamma^{-1} \abs{\nabla_A u (y,s)}^2\right)
    dyds.
\end{equation}
We let $t:=\gamma^{-1}$: from \eqref{eq:almost.the.end} we get that
\eqref{eq:thesis} holds for all $0< t< t^\ast := (\gamma^\ast)^{-1}$ and
$\rho=R_0$. In order to complete the proof for any $\rho\in[R_0, R_1]$, it is sufficient to repeat the same argument as above, choosing $R=\rho\sqrt\gamma$ in \eqref{eq:erre}.

\bibliographystyle{siam}

\end{document}